\definecolor{darkgreen}{rgb}{0.0625,0.64,0.0625}
\def\R{{\mathbb R}}
\theoremstyle{plain}
\newtheorem{thm}{Theorem}[section]
\theoremstyle{definition}
\newtheorem{rem}[thm]{Remark}
\newtheorem{defn}[thm]{Definition}
\def\Bbb#1{{\mathbb#1}}
\def\R{\Bbb R}
\numberwithin{equation}{section}
\newcommand\blfootnote[1]{%
 \begingroup
 \renewcommand\thefootnote{}\footnote{#1}%
 \addtocounter{footnote}{-1}%
 \endgroup
 }
\begin{document}

\title[Forming periodic orbits with similarity signature curves]{Similarity signature curves for forming periodic orbits in the Lorenz system}
   \dedicatory{Dedicated to  Professor Peter J. Olver on the occasion of
    his 70th birthday}     

    \author[J. D. Li]{Jindi Li}
    \address{ Jindi Li\newline\indent
     Department of Mathematics, Northeastern University, Shenyang, 110819, P.R. China}
    \email{lijindi0717@hotmail.com}

    \author[Y. Yang]{Yun Yang${}^*$}\blfootnote{${}^*$~Corresponding author: yangyun@mail.neu.edu.cn}
    \address{ Yun Yang\newline\indent
     Department of Mathematics, Northeastern University, Shenyang, 110819, P.R. China}
    \email{yangyun@mail.neu.edu.cn}

\begin{abstract}
In this paper, we systematically investigate the short periodic orbits of the Lorenz system by the aid of the similarity signature curve, and a novel method to find the short-period orbits of the Lorenz system is proposed. The similarity invariants are derived by the equivariant moving frame theory and then the similarity signature curve occurs along with them. The similarity signature curve of the Lorenz system presents a more regular behavior than the original one. By combining the sliding window method, the quasi-periodic orbits can be detected numerically, all periodic orbits with period $p \leqslant 8$ in the Lorenz system are found, and their period lengths and symbol sequences are calculated.
\end{abstract}


\keywords{Lorenz system;\; similarity signature curve;\; periodic orbit;\; sliding window method}


\maketitle
\section{Introduction}
The Lorenz system was initially derived from the Oberbeck-Boussinesq approximation by Lorenz in 1963 \cite{Lorenz}. Since then, chaos, as an interesting phenomenon in nonlinear dynamical systems has become a widely-studied topic of great interest to specialists and non-specialists alike. Tucker strictly proved the existence of the Lorenz attractor \cite{exi}, which laid the foundation for subsequent research. In fact, there are infinite periodic orbits in chaotic attractors, and unstable periodic orbits can be used to characterize the chaotic state of the system. Therefore, exploring the existence and position of periodic orbits in chaotic systems attracts the researchers' attention.

In chaos, finding all possible symmetries, i.e., self-equivalences or self-congruences
provides an effective way to understand the mathematical properties of the whole system.
Generally,  the efficient solution of an equivalence problem rests on investigating some related invariants.
In fact, invariants supply us with a moduli space of a certain kind of geometric object
under group transformations. To obtain the invariants of submanifolds for general transformation groups, Fels and Olver
formulated a new, powerful, constructive approach to the equivariant moving frame theory \cite{mft}.
Let $M$ be a smooth, meaning $C^{\infty}$ manifold with a sufficiently high order jet bundle, and suppose $G$ is a Lie group acting smoothly on $M$ via prolongation.
The practical construction of a moving frame is based on a choice of cross section $K\subset M$ to the group orbits \cite{mft, pro}.
For example, the right-equivariant moving frame map $\rho:M\rightarrow G$ is locally uniquely obtained by
solving the normalization equations $g\cdot z\in K$ for the group parameters $g=\rho(z)$ in terms of the point $z\in M$.
Then substituting the moving frame formulae $g=\rho(z)$ into the unnormalized components leads to the fundamental invariants $I_1(z),\cdots, I_{m-r}(z)$,
where $r$ and $m$ denote the dimensions of $G$ and $M$, respectively.
Then every invariant $I(z)$ can be locally uniquely expressed as a function of the fundamental invariants \cite{fram}.

According to Cartan's main idea,   two regular submanifolds
are (locally) equivalent if and only if they have
identical syzygies among all their differential
invariants. More generally, as a consequence of the Fundamental Basis Theorem \cite{equi}, one can verify
that, for any Lie group action, the entire algebra of differential
invariants can be generated from a finite number of low order invariants by repeated
invariant differentiation. These typically include the generating differential invariants
$I^1,I^2,\cdots I^l$ as well as a certain finite collection of their invariant derivatives $I^{\nu}_{,J}$.
These differential invariants serve to define a signature map $\sigma:S\to\Sigma\subset\R^{N}$
whose image is a
differential invariant signature of the original submanifold $S$. Under certain regularity assumptions,
the signature solves the equivalence problem: two $p$-dimensional submanifolds
are locally equivalent under the transformation group if and only if they have identical
signatures \cite{pro}.

The signature curve $\Sigma\subset\R^2$ of the Euclidean plane curve $C\subset\R^2$ is parametrized by the two lowest order differential
invariants \cite{exten}
\begin{equation*}
	\chi:C\rightarrow \Sigma=\left\{\left(\kappa,\displaystyle{\frac{d\kappa}{ds}}\right)\right\}\subset\R^2.
\end{equation*}

Similarly, the differential invariant signature for the Euclidean space curve $C\subset\R^3$ is defined by
\begin{equation*}
	\Sigma=\left\{(\kappa,\kappa_s,\tau)\right\}\subset\R^3,
\end{equation*}
where $\kappa$ and $\tau$ are the curvature and torsion of the curve $C$, respectively.

The above-mentioned two types of the signature curve are invariant to both the translation and rotation of the curve.
This concept has received considerable attentions in computer vision, mostly in the issues of object recognition and symmetry detection. Calabi et al. introduced a new paradigm, the differentially invariant signature curve or manifold, for the invariant recognition of visual objects, and discussed various aspects of the numerical computation of signatures and their applications \cite{dniscaor}. Boutin gave the signature formulas which are invariant under the action of the Euclidean group and affine group respectively. Another important property of the signature curve is that it is valid for any fine partition of a given curve \cite{Num}. Hoff and Olver proposed an automated solution to two-dimensional picture puzzles using the signature curve \cite{puzzle}. Bruckstein and Snaked presented a general framework for skew-symmetric detection, based on invariant planar curve descriptions and successfully applied it to mirror-symmetric polygons distorted by affine and projective viewing transformations \cite{Ske}. The similarity signature curve is a generalization of the signature curve under similarity transformation, which exhibits invariance to translation, rotation, and scaling. Bruckstein and Netravali generalized the curvature versus arc length representation that is invariant under Euclidean motions to general geometric viewing transformations, such as similarity and affine transformations. Such results can simplify the problem of model-based planar object recognition under partial occlusion to the matching of locally invariant signature functions \cite{Ond}. Ghorbel et al. completed image reconstruction using the similarity signature curve \cite{img}. As a matter of fact, the Lorenz attractor exhibits fractal structure and self-similarity, and simpler structures can be modeled using the similarity signature curve.

Periodic orbit theory is a powerful tool for analyzing the dynamic behavior of chaotic systems, which can effectively calculate the average value of physical quantities of dynamic systems \cite{app, cyc}. Many methods for finding periodic orbits have  been proposed. The interval method provides a simple computational test for the uniqueness, existence, and nonexistence of the zeros of a map within a given interval vector \cite{Kra} , and supplies research support for the possibility of complete analysis of piecewise continuous systems by combination with the Poincar\'e map \cite{fen}. Galias and Zgliczy\'nski introduced the Krawczyk operator to prove the existence of periodic orbits of infinite-dimensional discrete dynamical systems and found all periodic orbits of a given period \cite{Gal}.  In addition, the traditional approach is to find periodic orbits by a general technique based on the construction of a graph describing the dynamics of the system. Galias and Tucker successfully studied the existence of periodic orbits for a particular type of system by using symbolic dynamics combined with the interval method \cite{qujian}. Moreover, a variational principle was utilized to determine the periodic orbits of a class of continuous systems and unstable spatiotemporally periodic solutions of extended systems, such as the R\"ossler attractor \cite{flo}.

It has also been suggested that it is difficult to obtain an efficient set of Poincar\'e sections when finding the periodic orbits of turbulent or high-dimensional flows. In response to this situation, some people proposed the variational method. Lan replaced the Poincar\'e section with the variational method and verified the existence of periodic orbits in the numerical calculation \cite{Phy}.

In this paper, we present a method that can be employed to model periodic orbits of the Lorenz system, which relies on the similarity signature curve of the system, the interval method, and the sliding window method. The similarity signature curve of the system is calculated, and then the trajectory of the Lorenz system is segmented by the sliding window method. Each quasi-periodic orbit is verified by using the Krawczyk operator. This makes it possible to find periodic orbits in the trajectories between the segmentation points.

The structure of this paper is as follows. In Sec.2, we introduce the Lorenz equation which sketches the chaotic behavior, together with the typical parameters. On the other hand, the similarity invariants are derived by using the equivariant moving frame theory, and the similarity signature curve of the Lorenz attractor is described. In Sec. 3, the sliding window method is improved, and the algorithm based on the similarity signature curve to segment trajectories is proposed, which can obtain the sequence of segmentation points. The trajectories between these points form quasi-periodic orbits. In Sec. 4, all periodic orbits with period $p \leqslant 8$ in the Lorenz system are shown, and the period number, period length, and motion trajectory of each periodic orbit are listed. The results of this paper are summarized in Sec. 5.

\subsection*{Acknowledgements}  This work was supported by the Fundamental Research Funds for the Central Universities under grant-N2104007, and the second author would also like to express his deep gratitude to Professor Peter J. Olver for his encouragement and help during his stay in School of Mathematics, University of Minnesota as a Visiting Professor, while part of this work was completed.

\section{Lorenz system and its similarity signature curve}\label{sec-back}
\subsection{Lorenz system dynamics}\label{mvf}\

The Lorenz system, as the benchmark system for chaotic dynamics, is the first chaotic dissipative system discovered in numerical experiments. In this section, we discuss the relationship between the similarity signature curve and the periodic orbit. In details, the Lorenz equations can be described as a system of three differential equations, namely,
\begin{equation*}
	\left\{ \begin{array}{l}
		\dot x\left( t \right) = \sigma\big( {y\left( t \right) - x\left( t \right)} \big),\\
		\dot y\left( t \right) = rx\left( t \right) - y\left( t \right) - x\left( t \right)z\left( t \right),\\
		\dot z\left( t \right) = x\left( t \right)y\left( t \right) - \eta z\left( t \right),
	\end{array} \right.
\end{equation*}
where $\dot x$ denotes the derivative of $x$ with respect to time $t$. The positive parameters $\sigma$, $\eta$, and $r$ raise up from the physical context and Lorenz chose in his original work
\begin{center}
	$\sigma = 10$,$\quad$ $\eta =\displaystyle{\frac{{\text{8}}}{{\text{3}}}}$, $\quad$$r = 28$.
\end{center}
In this paper, we consider the numerical simulation of the Lorenz system with above parameter values. Fig. \ref{Fig:fige1} shows a numerical approximation with the help of the computer software Matlab.

\begin{figure*}[htpb]
	\centering
	\includegraphics[height=0.47\textwidth,width=0.6\textwidth]{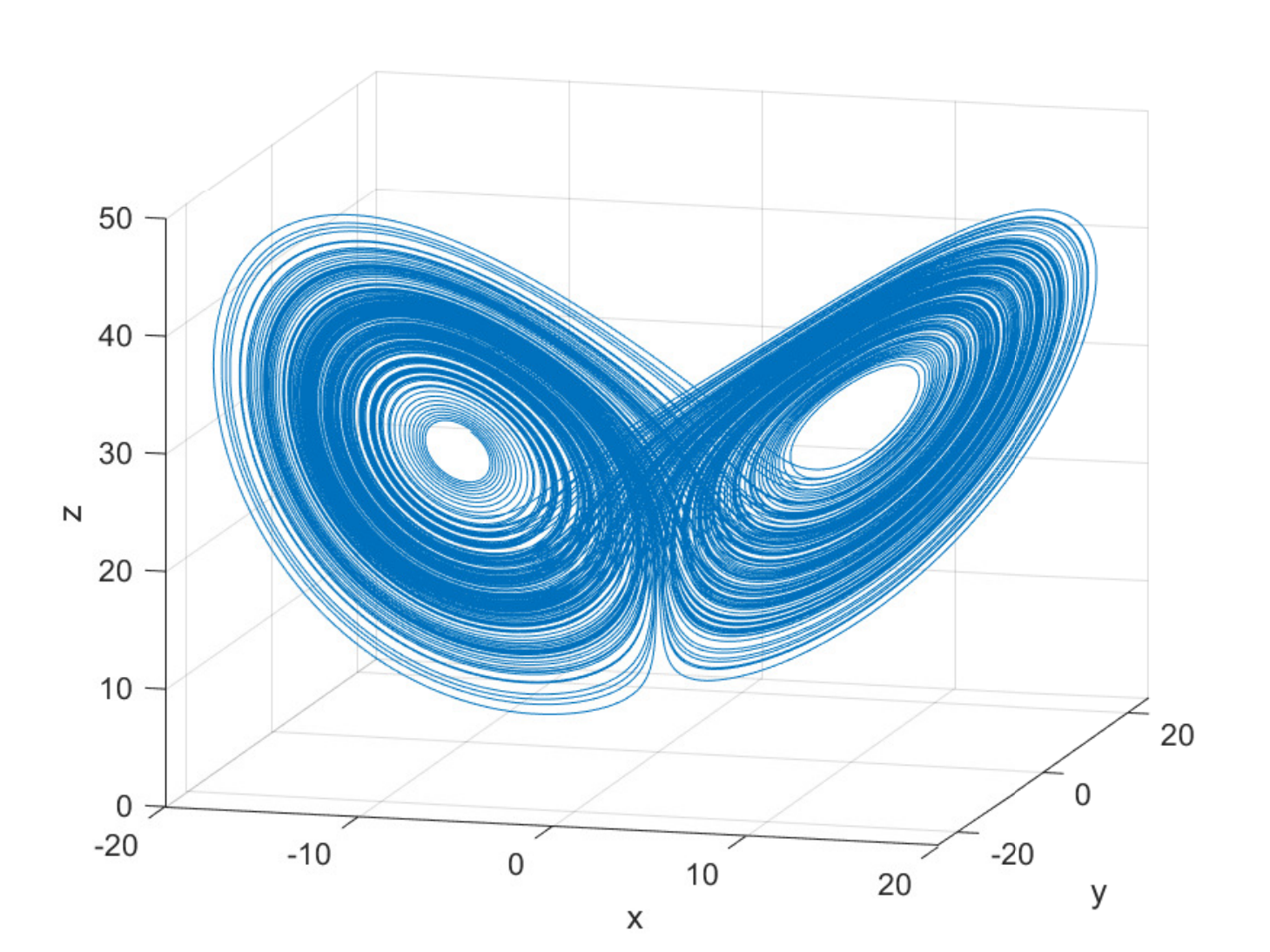}
	\caption{A trajectory of the Lorenz system.}
	\label{Fig:fige1}
\end{figure*}

First of all, the Lorenz equations contain symmetry. Applying the coordinate transformation
\begin{equation*}
	\left( {x,y,z} \right) \to \left( { - x, - y,z} \right),
\end{equation*}
the equations remain the same.

For $r < 1$ (and $\sigma$, $\eta$ arbitrarily), the point $\left( { 0, 0, 0} \right)$ is the only fixed point, which is automatically asymptotic stable. It holds that all solutions converge to the origin as $t \to \infty $.
At $r = 1$, it occurs a supercritical Pitchfork–bifurcation and two new fixed points appear. They fulfill
\begin{equation*}
	x = y = \sqrt {b\left( {r - 1} \right)}, \qquad z = r - 1.
\end{equation*}
When $r > {r_H} \approx 24.737$, the Lorenz system, which exhibits chaotic behavior, is the object of our study.

In addition, its stability matrix can be obtained from the Lorenz equations
\begin{equation*}
	M = \left[ {\begin{array}{*{20}{c}}
			{ - \sigma}&{\sigma}&0\\
			{r - z_0}&{ - 1}&{ - x_0}\\
			y_0&x_0&{ - \eta}
	\end{array}} \right],
\end{equation*}
where $x_0$, $y_0$, and $z_0$ are the coordinates of the stable points. The eigenvalues at stable points indirectly describe the degree to which trajectories in its vicinity are attracted or repelled. The eigenvalue at the origin is $\left( {{\lambda ^{\left( 1 \right)}},{\lambda ^{\left( 2 \right)}},{\lambda ^{\left( 3 \right)}}} \right) = \left( {11.83, - 2.67, - 22.83} \right)$, and the eigenvalues at the other two symmetrical stable points are $\left( {{\mu ^{\left( 1 \right)}} \pm {\omega ^{\left( 1 \right)}}i,{\lambda ^{\left( 3 \right)}}} \right) = \left( {0.094 \pm 10.19i, - 13.85} \right)$. The unstable eigenplane of the origin is spanned by Re ${e^{\left( 1 \right)}}$ and Im ${e^{\left( 1 \right)}}$. The Lorenz flow has an unstable eigenvector ${e^{\left( 1 \right)}}$ and two stable eigenvectors ${e^{\left( 2 \right)}}$, ${e^{\left( 3 \right)}}$ near the origin.

In \cite{book}, the motion of the Lorenz attractor is given in numerical terms. The periodic time scale in the neighborhood of the equilibrium $\left(x, y, z \right) = \left(0, 0, 0 \right)$ is of order $ \approx T$. The contraction/expansion radially by the multiplier ${\Lambda _{radial}}$, and by the multiplier ${\Lambda _{j}}$ along the ${e^{\left( j \right)}}$ eigen-direction per a turn of the spiral:
\begin{equation*}
	T = 2\pi /\omega , \quad {\Lambda _{radial}} = {e^{T\mu }}, \quad{\Lambda _j} = {e^{T\mu \left( j \right)}}.
\end{equation*}

The trajectory period around the critical point is about $T = 2\pi /10.19 \approx 0.62s$. Near the two stable points the unstable manifold trajectories spiral out, with very small radial per-turn expansion multiplier ${\Lambda ^1} = \exp \left( {0.094T} \right) \approx  1.06$, and very strong contraction multiplier ${\Lambda ^3} = \exp \left( { - 13.85T} \right) \approx 1.957 \times {10^{ - 4}}$ onto the unstable manifold. This contraction confines the Lorenz attractor to a two-dimensional surface. In the neighborhood of the origin, the trajectories have an extremely strong contraction along the ${e^{\left( 3 \right)}}$ direction and the slowest contraction along the ${e^{\left( 2 \right)}}$ direction. In the plane, the expansion of the attractor in the ${e^{\left( 1 \right)}}$ direction takes precedence over the contraction in the ${e^{\left( 2 \right)}}$ direction, which makes it difficult for a few trajectories can approach the origin.

\subsection{Similarity signature curve}\

The goal of this part is to establish a basic functional relation or syzygy among the
differential invariants under the action of the similarity group $\mathrm{Sim}(3)=\{
\lambda Ax + b\}$,
where $A$ is a real orthogonal $3\times3$ matrix, $b\in\R^3$ is a real vector and $\lambda>0$ is a real number.
In fact, the similarity invariants have been obtained in \cite{chaos, sfc}. Here we mainly use the equivariant moving frame theory to build the syzygy and the similarity signature curve.

We denote points in $\R^3$ by $z=\left(x,u,v\right)\in\R^3$, with $z(t)=\left(x(t),u(t),v(t)\right)$
for $t\in I\subset\R$ being a smoothly parametrized curve.

Let us consider the action
\begin{equation*}
	\left(X,U,V\right)=Z=g\cdot z=g\cdot\left(x,u,v\right)
\end{equation*}
of the similarity group $\mathrm{Sim}(3)$ in $\R^3$, so that
\begin{equation}\label{sim-tranfm}
	X=k\alpha\cdot z+a,\quad U=k\beta\cdot z+b,\quad V=k\gamma\cdot z+c,
\end{equation}
where $k>0, a,b,c$ are real numbers, $\alpha,\beta, \gamma$ are the rows of the real orthogonal $3\times3$  matrix.

To construct a (right) equivariant moving frame, we prolong the $\mathrm{Sim}(3)$ action to
the curve jet spaces $\mathrm{J}^n$, which has local coordinates
\begin{equation*}
	z^{(n)}=\left(x,u,v,u_x,v_x,u_{xx},v_{xx},\cdots,u_n,v_n\right).
\end{equation*}
The formulae for the prolonged action on $\mathrm{J}^n$ are provided by implicit differentiation, based on
\begin{equation*}
	dX=k\alpha\cdot z_tdt,\qquad D_X=\displaystyle{\frac{1}{k\alpha\cdot z_t}\frac{d}{dt}}.
\end{equation*}
Then we have
\begin{gather*}
	U_X=\frac{\beta\cdot z_t}{\alpha\cdot z_t},\qquad V_X=\frac{\gamma\cdot z_t}{\alpha\cdot z_t},\\
	U_{XX}=\frac{\left(\alpha\cdot z_t\right)\left(\beta\cdot z_{tt}\right)-\left(\alpha\cdot z_{tt}\right)\left(\beta\cdot z_{t}\right)}{k\left(\alpha\cdot z_t\right)^3},\\
	V_{XX}=\frac{\left(\alpha\cdot z_t\right)\left(\gamma\cdot z_{tt}\right)-\left(\alpha\cdot z_{tt}\right)\left(\gamma\cdot z_{t}\right)}{k\left(\alpha\cdot z_t\right)^3}.
\end{gather*}

To compute the equivariant moving frame, we must normalize the $\mathrm{dim\{Sim(3)\}} = 7$
independent group parameters by setting 7 of the transformed jet variables equal to conveniently
chosen constants. This corresponds to the choice of a cross section to the
prolonged group orbits, or, equivalently, to placing the curve in normal form, \cite{equi}.
The standard cross section that produces the classical moving frame is given by
\begin{center}
	$x=u=v=u_x=v_x=v_{xx}=0, \qquad u_{xx}=1.$
\end{center}
The determination of the moving frame associated with the above cross-section  relies on
solving the corresponding normalization equations
\begin{equation*}
	X=U=V=U_X=V_X=V_{XX}=0, \qquad U_{XX}=1,
\end{equation*}
for the group parameters $g\in\mathrm{Sim}(3)$, using the explicit formulas for the prolonged
Euclidean transformations that were obtained by implicit differentiation.

The order zero normalizations
$X = U = V = 0$ prescribe the translation parameters of the group element
$g\in\mathrm{Sim}(3)$. Since they play no further role in the prolonged action or other moving frame
formulae, we can effectively ignore them from here on.
After some obvious simplification, we find
\begin{equation*}
	\beta\cdot z_t=0,\qquad \gamma\cdot z_t=0, \qquad \gamma\cdot z_{tt}=0,\qquad \displaystyle{\frac{\beta\cdot z_{tt}}{k\left(\alpha\cdot z_t\right)^2}=1}.
\end{equation*}
We impose the nondegeneracy condition $|z_t\wedge z_{tt}|\neq0$.
Therefore, the right-equivariant moving frame induced by the cross-section  is given
by
\begin{align}
	\begin{split}\label{mfv}
		&\gamma=\frac{z_t\wedge z_{tt}}{\left|z_t\wedge z_{tt}\right|},\quad \beta=\frac{|z_t|^2z_{tt}-(z_t\cdot z_{tt})z_t}{\left||z_t|^2z_{tt}-(z_t\cdot z_{tt})z_t\right|}=\frac{|z_t|^2z_{tt}-(z_t\cdot z_{tt})z_t}{|z_t||z_t\wedge z_{tt}|},\\
		&\alpha=\beta\wedge\gamma=\frac{z_t}{|z_t|},\quad k=\frac{\beta\cdot z_{tt}}{\left(\alpha\cdot z_t\right)^2}=\frac{|z_t\wedge z_{tt}|}{|z_t|^3}.
	\end{split}
\end{align}
As always, a complete system of functionally independent differential invariants is
obtained by invariantization, that is, substituting the moving frame formulae  into the
unnormalized transformation rules. All other differential invariants are obtained by differentiating the curvature
and torsion with respect to the similarity length element,
\begin{equation*}
	d\tilde{s}=\displaystyle{\frac{|z_t\wedge z_{tt}|}{|z_t|^2}dt},
\end{equation*}
which comes from substituting the moving frame formula for $\alpha$ and $k$ into $dX$.
\begin{align*}
	U_{XXX}=&\frac{1}{k^2\left(\alpha\cdot z_t\right)^5}\times\Big((\beta\cdot z_{ttt})(\alpha\cdot z_t)^2\\
	&\quad-(\beta\cdot z_t)(\alpha\cdot z_t)(\alpha\cdot z_{ttt})-3(\alpha\cdot z_{tt})(\beta\cdot z_{tt})(\alpha\cdot z_t)+3(\alpha\cdot z_{tt})^2(\beta\cdot z_t)\Big),\\
	V_{XXX}=&\frac{1}{k^2\left(\alpha\cdot z_t\right)^5}\times\Big((\gamma\cdot z_{ttt})(\alpha\cdot z_t)^2\\
	&\quad-(\gamma\cdot z_t)(\alpha\cdot z_t)(\alpha\cdot z_{ttt})-3(\alpha\cdot z_{tt})(\gamma\cdot z_{tt})(\alpha\cdot z_t)+3(\alpha\cdot z_{tt})^2(\gamma\cdot z_t)\Big).\\
\end{align*}
The generating differential invariants are the similarity curvature $\tilde{\kappa}$,
obtained from $u_{xxx}$, and the similarity torsion $\tilde{\tau}$ , obtained
from $v_{xxx}$.
By using \eqref{mfv}, a short computation
produces the required expressions for the curvature and torsion invariants
\begin{align*}
	\tilde{\kappa}&=\iota(u_{xxx})=\frac{(z_t\wedge z_{ttt})\cdot(z_t\wedge z_{tt})|z_t|^2-3(z_t\cdot z_{tt})|z_t\wedge z_{tt}|^2}{|z_t\wedge z_{tt}|^3},\\
	\tilde{\tau}&=\iota(v_{xxx})=\frac{|z_t|^3[z_t,z_{tt},z_{ttt}]}{|z_t\wedge z_{tt}|^3}.
\end{align*}

\begin{rem} In fact, taking $k=1$ in \eqref{sim-tranfm} and using cross section $x=u=v=u_x=v_x=v_{xx}=0$, a similar process yields the Euclidean arc element
	$ds=|z_t|dt$, the Euclidean curvature $\kappa=\iota(u_{xx})=\displaystyle{\frac{|z_t\wedge z_{tt}|}{|z_t|^3}}$ and the Euclidean torsion $\tau=\iota(v_{xxx})/\kappa=\displaystyle{\frac{[z_t,z_{tt},z_{ttt}]}{|z_t\wedge z_{tt}|^2}}$. At the same time, $\iota(u_{xxx})=\kappa_s$.
\end{rem}
By using the inductive construction method of moving frames introduced by Kogan \cite{ko}, we can see $\mathrm{Sim}(3)= B\cdot \mathrm{SE}(3)$, where
$B=\left(
\begin{array}{ccc}
	k & 0 & 0 \\
	0 & k & 0 \\
	0 & 0 & k \\
\end{array}
\right)
$
and
$B\cap \mathrm{SE}(3)=\{I,-I\}$ is finite. Now we prolong the action of $B$ up to third order
\begin{align*}
	x\to kx, \quad u\to ku,\quad v\to kv, \quad u_x\to u_x,\quad v_x\to v_x,\\
	u_{xx}\to \displaystyle{\frac{u_{xx}}{k}},\quad v_{xx}\to \displaystyle{\frac{v_{xx}}{k}},\quad u_{xxx}\to \displaystyle{\frac{u_{xxx}}{k^2}},\quad v_{xxx}\to \displaystyle{\frac{v_{xxx}}{k^2}}.
\end{align*}
Restricting these transformations to the Euclidean cross section $K_E=\{x=u=v=u_x=v_x=v_{xx}=0\}$, we obtain
\begin{gather*}
	u_{xx}\to \frac{u_{xx}}{k}, \quad u_{xxx}\to \frac{u_{xxx}}{k^2},\quad v_{xxx}\to \frac{v_{xxx}}{k^2}.
\end{gather*}
The above action is free on the open subset $\{z\in K_E|u_{xx}\neq0\}$, and we choose the cross section $K=\{z\in K_E|u_{xx}=1\}$
to the orbits of $B$ on $K_E$. This produces a moving frame $\rho_B:K_E\to B$, that is,
\begin{equation*}
	k=u_{xx}.
\end{equation*}
Thus we obtain the expression of the similarity invariants $\tilde{\kappa}$ and $\tilde{\tau}$ in terms of Euclidean invariants
\begin{center}
	$\tilde{\kappa}=\displaystyle{\frac{\kappa_s}{\kappa^2}},\qquad \tilde{\tau}=\displaystyle{\frac{\tau}{\kappa}}$.
\end{center}

Let us introduce the following basis for the infinitesimal
generators in the Lie algebra of $\mathrm{Sim}(3)$
\begin{gather*}
	\mathbf{v}_1=\partial_x,\quad \mathbf{v}_2=\partial_u,\quad \mathbf{v}_3=\partial_v,\quad \mathbf{v}_4=v\partial_u-u\partial_v,\\
	\mathbf{v}_5=-u\partial_x+x\partial_u,\quad \mathbf{v}_6=-v\partial_x+x\partial_v,\quad \mathbf{v}_7=x\partial_x+u\partial_u+v\partial_v.
\end{gather*}
The prolonged infinitesimal generators of the group $\mathrm{Sim}(3)$ action on curve jets are
\begin{align*}
	\centering
	\mathrm{pr}\mathbf{v}_1&=\;\partial_x,\qquad \mathrm{pr}\mathbf{v}_2\;=\;\partial_u, \qquad \mathrm{pr}\mathbf{v}_3\;=\;\partial_v,\\
	\mathrm{pr}\mathbf{v}_4&=\;v\partial_u-u\partial_v+v_x\partial_{u_x}-u_x\partial_{v_x}+v_{xx}\partial_{u_{xx}}-u_{xx}\partial_{v_{xx}}+\cdots,\\
	\mathrm{pr}\mathbf{v}_5&=\;-u\partial_x+x\partial_u+(1+u^2_x)\partial_{u_x}+u_xv_x\partial_{v_x}+3u_xu_{xx}\partial_{u_{xx}}\\
	&\qquad+\left(u_{xx}v_{x}+2u_{x}v_{xx}\right)\partial_{v_{xx}}+\left(3u^2_{xx}+4u_xu_{xxx}\right)\partial_{u_{xxx}}\\
	&\qquad+\left(u_{xxx}v_{x}+3u_{xx}v_{xx}+3u_xv_{xxx}\right)\partial_{v_{xxx}}+\cdots,\\
	\mathrm{pr}\mathbf{v}_6&=\;-v\partial_x+x\partial_v+v_xu_x\partial_{u_x}+\left(1+v^2_x\right)\partial_{v_x}+\left(v_{xx}u_x+2v_xu_{xx}\right)\partial_{u_{xx}}\\
	&\qquad+3v_xv_{xx}\partial_{v_{xx}}+\left(v_{xxx}u_x+3v_{xx}u_{xx}+3v_xu_{xxx}\right)\partial_{u_{xxx}}\\
	&\qquad+\left(3v^2_{xx}+4v_xv_{xxx}\right)\partial_{v_{xxx}}+\cdots,\\
	\mathrm{pr}\mathbf{v}_7&=\;x\partial_x+u\partial_u+v\partial_v-u_{xx}\partial_{u_{xx}}-v_{xx}\partial_{v_{xx}}-2u_{xxx}\partial_{u_{xxx}}-2v_{xxx}\partial_{v_{xxx}}+\cdots.
\end{align*}
The invariant arc length derivative $\mathcal{D}=\iota(D_x)$ of any differential invariant $I = \iota(F)$ obtained
by invariantizing a differential function $F$ is specified by the recurrence relation
\begin{equation}\label{arcD}
	\mathcal{D}I=\mathcal{D}\iota(F)=\iota(D_xF)+\sum_{i=1}^7R^i\iota(\mathrm{pr}\mathbf{v}_i(F)),
\end{equation}
where $R^1,R^2,\cdots, R^7$ are the Maurer-Cartan invariants. To determine their formulas,
we write out \eqref{arcD} for the seven phantom invariants which come from the cross section variables $x$, $u$, $v$, $u_x$, $v_x$, $u_{xx}$, $v_{xx}$
\begin{gather*}
	0=\mathcal{D}\iota(x)=\iota(1)+R^1,\quad
	0=\mathcal{D}\iota(u)=R^2,\quad
	0=\mathcal{D}\iota(v)=R^3,\quad
	0=\mathcal{D}\iota(u_x)=1+R^5,\\
	0=\mathcal{D}\iota(v_x)=R^6,\quad
	0=\mathcal{D}\iota(u_{xx})=\tilde{\kappa}-R^7,\quad
	0=\mathcal{D}\iota(v_{xx})=\tilde{\tau}-R^4.
\end{gather*}
Thus, the general recurrence relation \eqref{arcD} becomes
\begin{equation*}
	\mathcal{D}\iota(F)=\iota(D_xF)-\iota(\mathrm{pr}\mathbf{v}_1(F))+\tilde{\tau}\iota(\mathrm{pr}\mathbf{v}_4(F))-\iota(\mathrm{pr}\mathbf{v}_5(F))+\tilde{\kappa}\iota(\mathrm{pr}\mathbf{v}_7(F)).
\end{equation*}
Then it is obvious that
\begin{equation*}
	\iota(u_{xxxx})=\tilde{\kappa}_{\tilde{s}}+3+2\tilde{\kappa}^2-\tilde{\tau}^2,\quad \iota(v_{xxxx})=\tilde{\tau}_{\tilde{s}}+3\tilde{\kappa}\tilde{\tau}.
\end{equation*}
\begin{defn}
	The differential invariant signature for the similarity space curve $C\subset\R^3$ is is parametrized by the three
	lowest order differential invariants
	\begin{center}
		$\Sigma=\left\{(\tilde{\kappa},\tilde{\kappa}_{\tilde{s}},\tilde{\tau})\right\}\subset\R^3$,
	\end{center}
	where $\tilde{s}$ is the similarity arc length parameter, $\tilde{\kappa}$ and $\tilde{\tau}$ are the similarity curvature and torsion of the curve $C$, respectively.
\end{defn}

In the actual calculation, the curve is composed of discrete points distributed in space. In \cite{dniscaor}, the authors provided an approach to the numerical approximation of differential invariants, based on a suitable combination of invariants of the group action to compute differential invariant signatures numerically in a fully group-invariant manner. In Fig. \ref{Fig:fige2}, suppose ${P_{i - 1}}$, ${P_{i}}$, ${P_{i + 1}}$, and ${P_{i + 2}}$ are four successive points on a three-dimensional curve  $C$. The Euclidean distance between points ${P_{i}}$ and ${P_{j}}$ is $\mathrm{dist}\left( {{P_i},{P_j}} \right)$. Let
\begin{gather*}
	a = \mathrm{dist}\left( {{P_{i - 1}},{P_i}} \right), \quad b = \mathrm{dist}\left( {{P_{i}},{P_{i + 1}}} \right), \quad c = \mathrm{dist}\left( {{P_{i - 1}},{P_{i + 1}}} \right),\\
	d = \mathrm{dist}\left( {{P_{i + 1}},{P_{i + 2}}} \right), \quad e = \mathrm{dist}\left( {{P_{i}},{P_{i + 2}}} \right), \quad f = \mathrm{dist}\left( {{P_{i - 1}},{P_{i + 2}}} \right).
\end{gather*}
\begin{figure}[htpb]
	\centering
	\includegraphics[height=0.47\textwidth,width=0.6\textwidth]{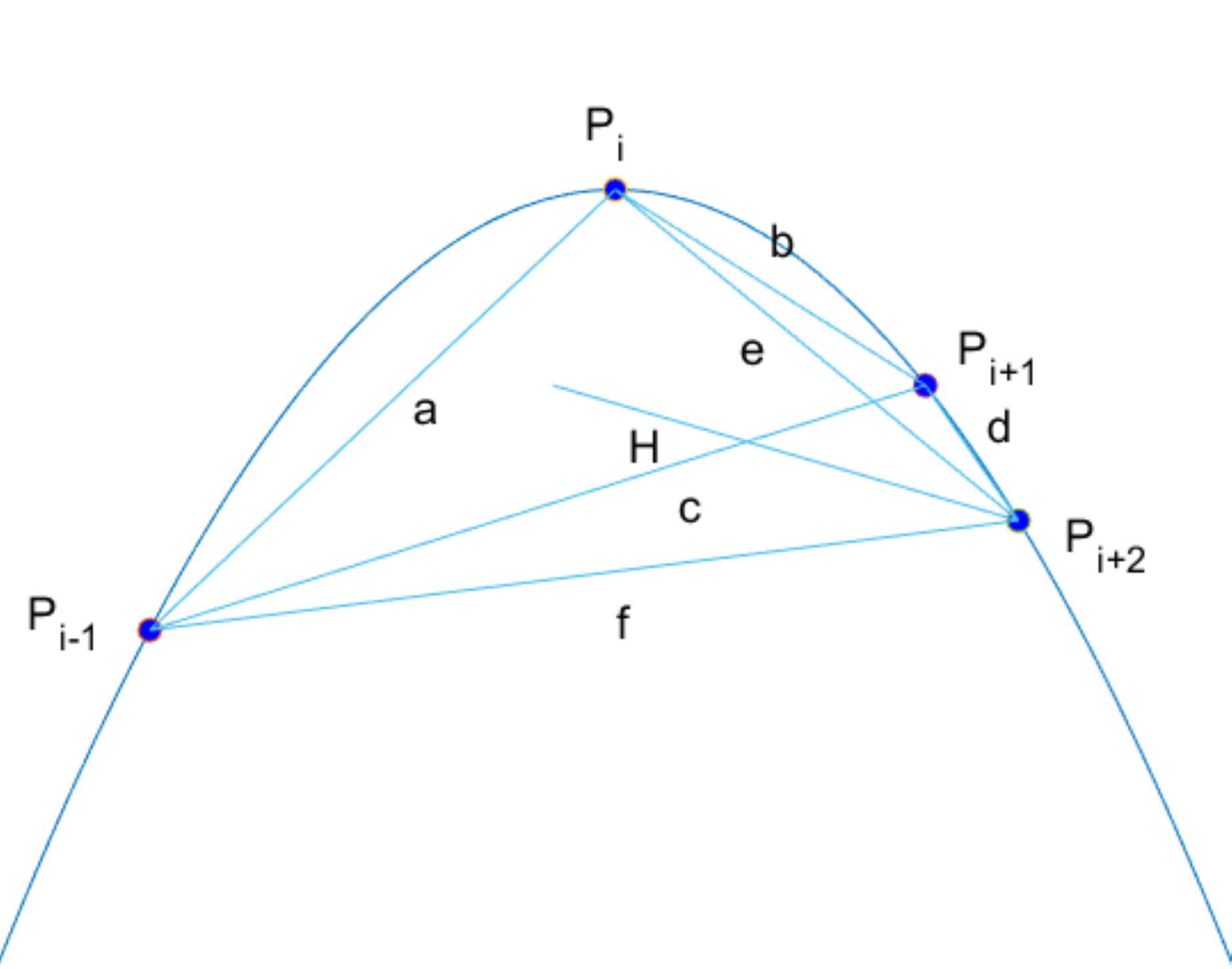}
	\caption{Consecutive points for signature approximations.}
	\label{Fig:fige2}
\end{figure}

The equations used to approximate the signature are as follows:
\begin{equation*}
	\kappa \big( {{P_i}} \big) = \displaystyle{\frac{{4\Delta }}{{abc}}}, \quad{\kappa _s}\big( {{P_i}} \big) = \displaystyle{\frac{{3\big(\kappa \left( {{P_{i + 1}}} \right) - \kappa \left( {{P_i}} \right)\big)}}{{a + b + c}}}, \quad \tau \big( {{P_i}} \big) = \displaystyle{\frac{{6H}}{{def\kappa \left( {{P_i}} \right)}}},
\end{equation*}
where $\Delta$ is the area of the triangle with sides $a, b, c$, and $H$ is the height of the tetrahedron with sides $a, b, c, d, e$, and $f$ \cite{exten}.

\subsection{Analysis of the similarity signature curve for the Lorenz attractor}\

Compared with the Euclidean signature curve, the similarity signature curve is not only invariant to translation and rotation but also invariant to scale invariance. Fig. \ref{Fig:fige3} and Fig. \ref{Fig:fige4} are the projections of the signature curve and the similarity signature curve of the Lorenz attractor on three coordinate planes. The similarity signature curve of the Lorenz attractor has a stronger self-similarity, which indirectly describes the geometric structure of the Lorenz attractor and is  shown as in Fig. \ref{Fig:fige5}.

\begin{figure}[htpb]
	\centering
	\subfigure[$x-y$ plane]{\includegraphics[height=0.47\textwidth,width=0.32\textwidth]{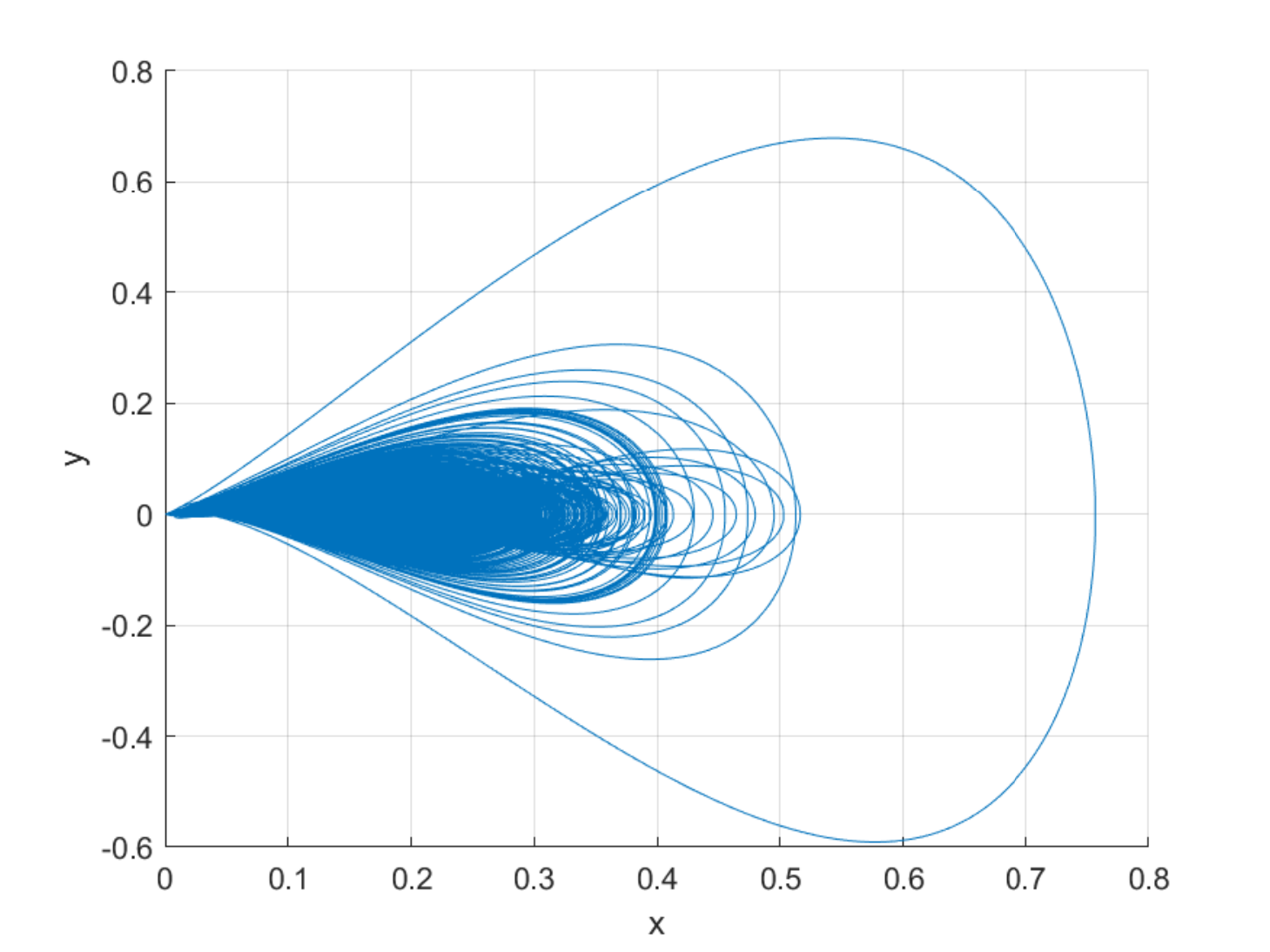}}
	\subfigure[$y-z$ plane]{\includegraphics[height=0.47\textwidth,width=0.32\textwidth]{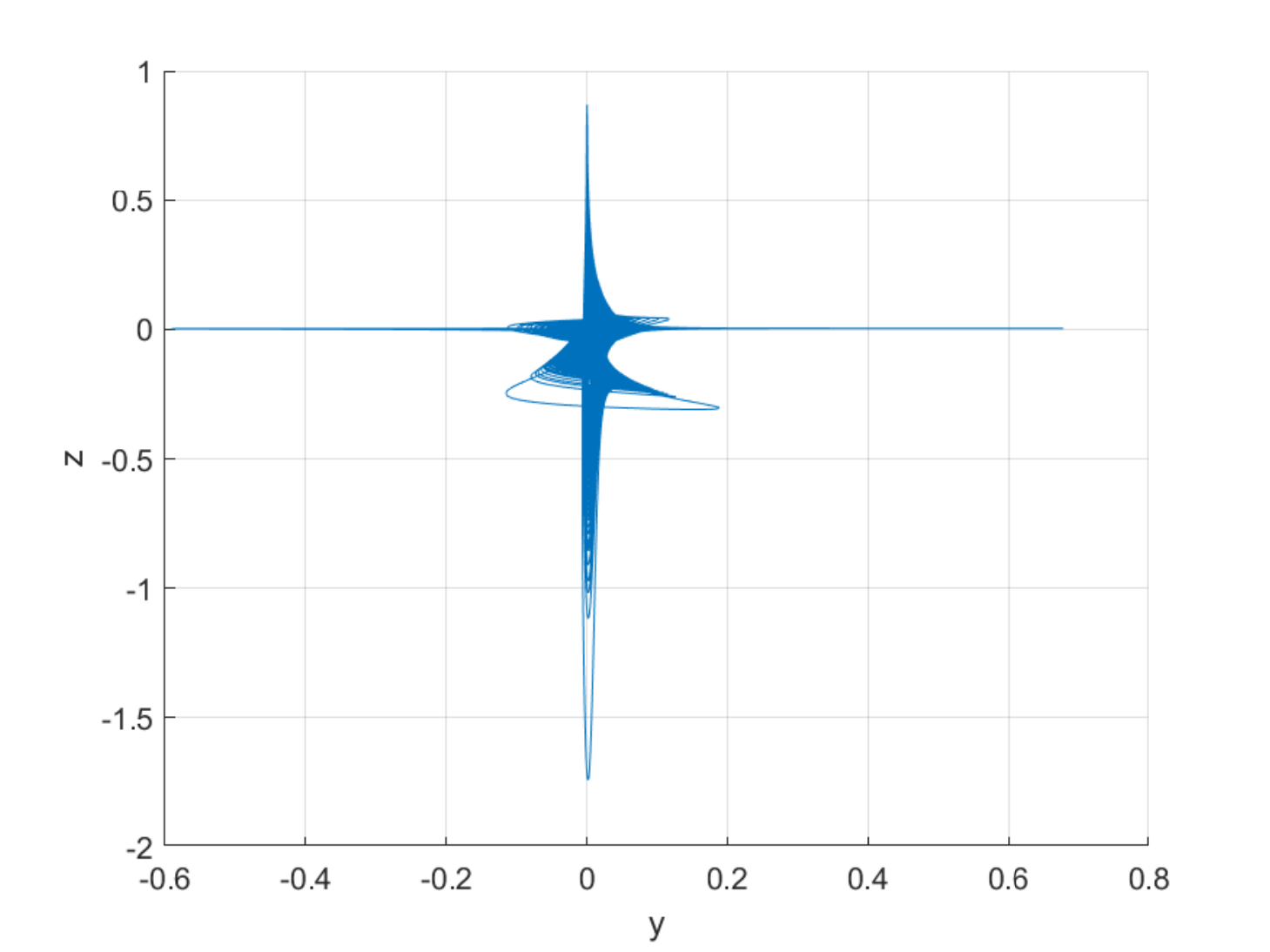}}
	\subfigure[$x-z$ plane]{\includegraphics[height=0.47\textwidth,width=0.32\textwidth]{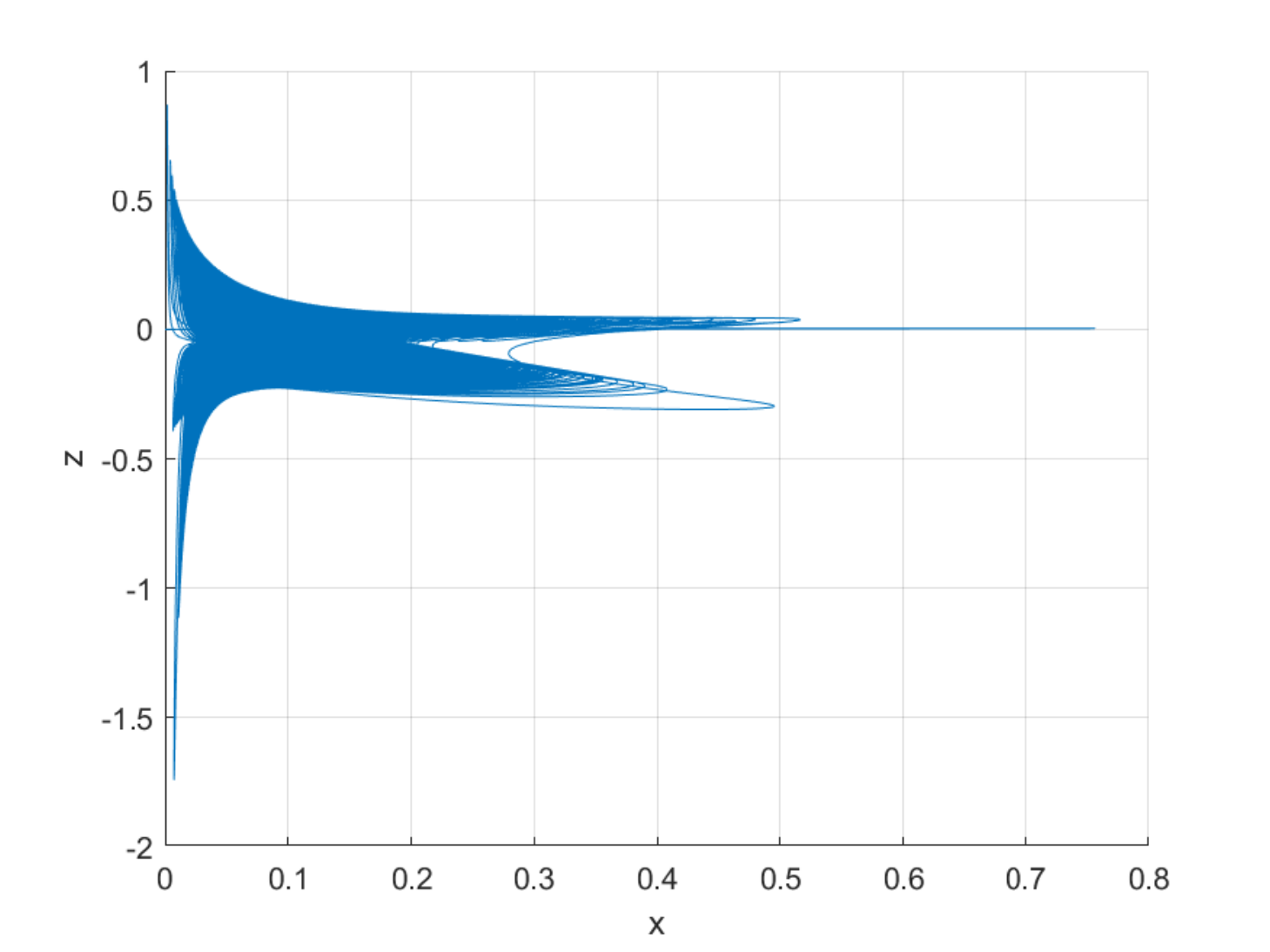}}
	\caption{Projection of signature curve of Lorenz attractor on coordinate plane.}
	\label{Fig:fige3}
\end{figure}
\begin{figure}[htpb]
	\centering
	\subfigure[$x-y$ plane]{\includegraphics[height=0.47\textwidth,width=0.32\textwidth]{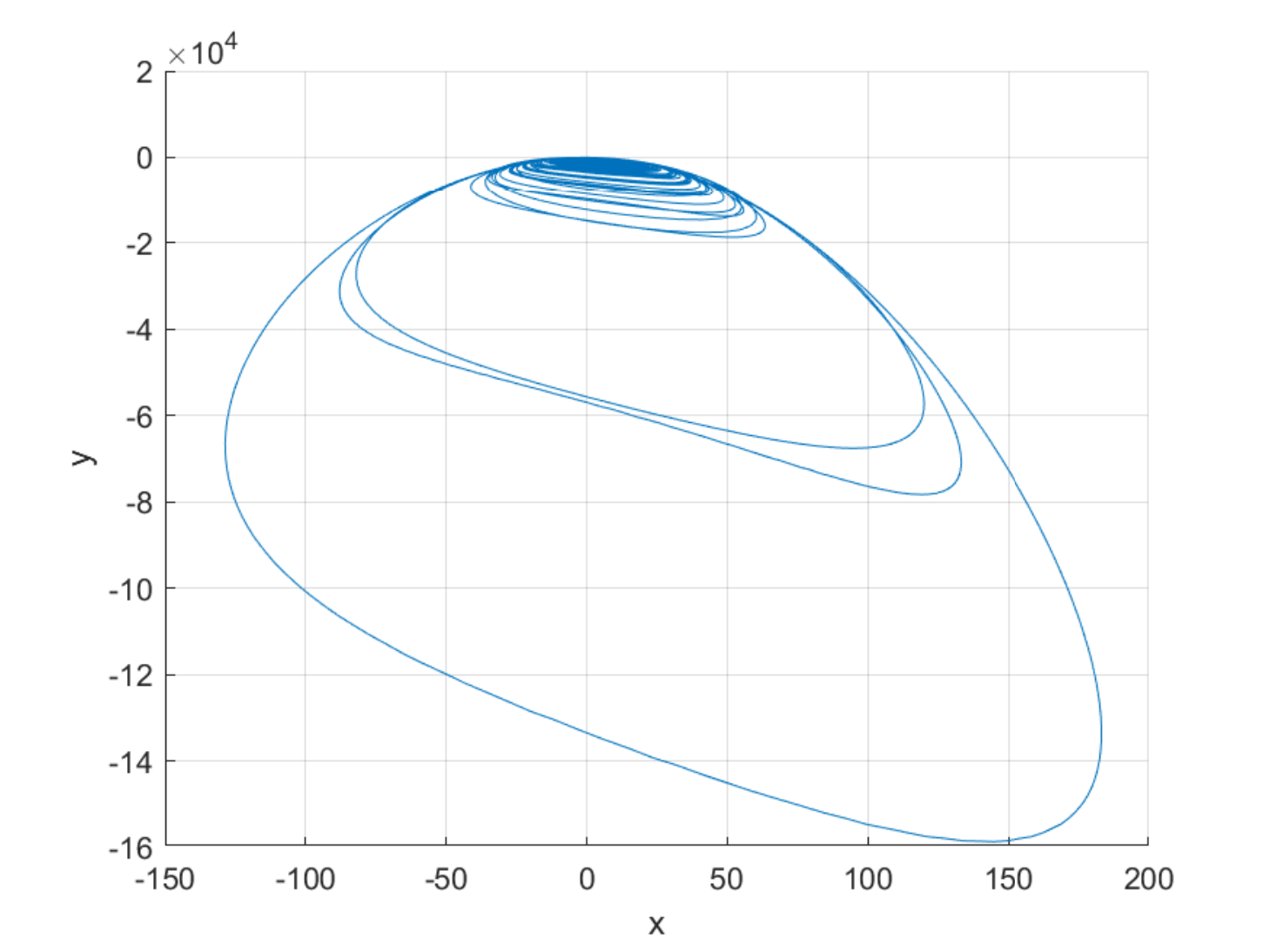}}
	\subfigure[$y-z$ plane]{\includegraphics[height=0.47\textwidth,width=0.32\textwidth]{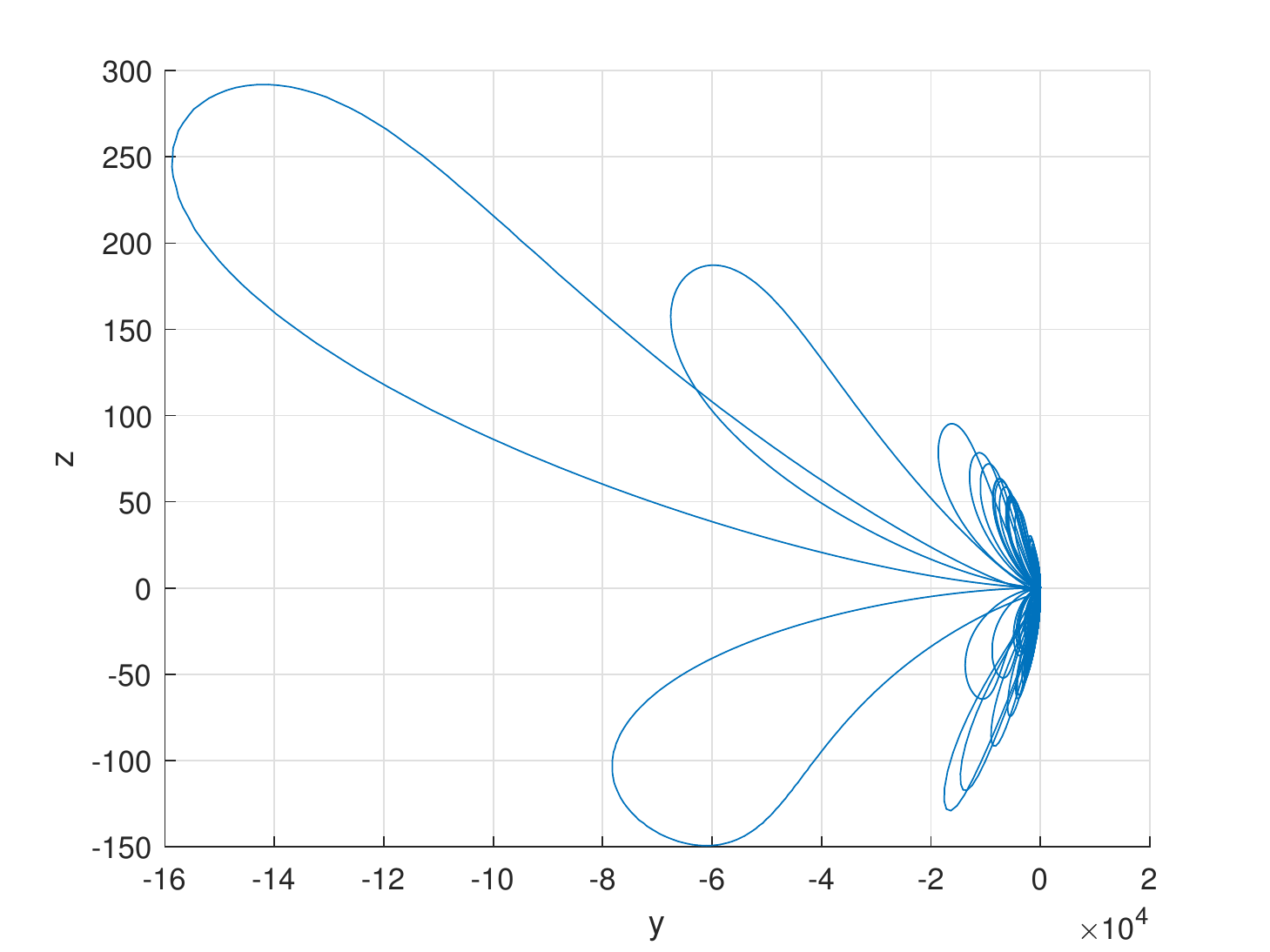}}
	\subfigure[$x-z$ plane]{\includegraphics[height=0.47\textwidth,width=0.32\textwidth]{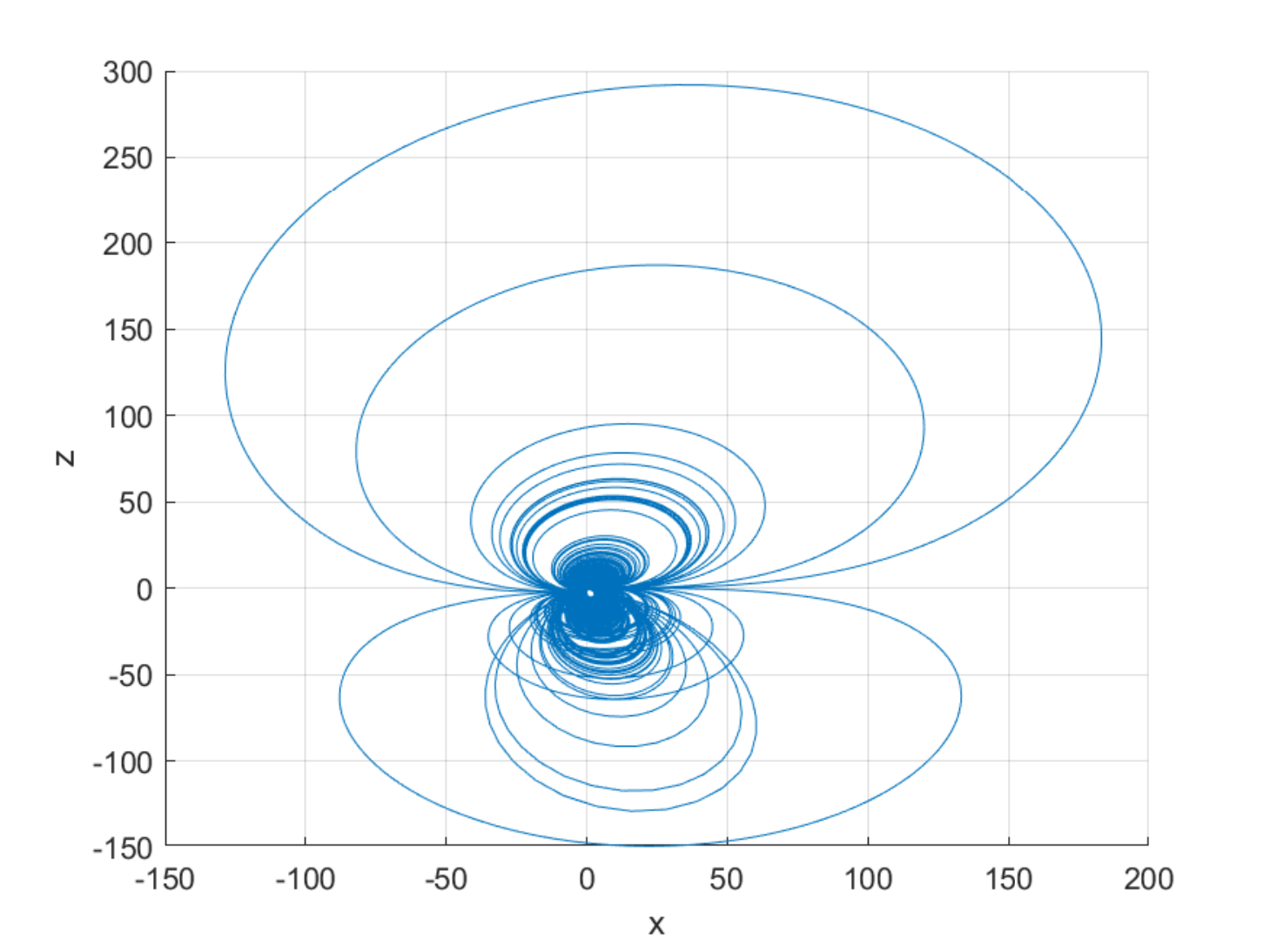}}
	\caption{Projection of similarity signature curve of Lorenz attractor on coordinate plane.}
	\label{Fig:fige4}
\end{figure}


It is worth noting that multiple curves with a given signature can be constructed. Hickman et al. constructed an example of a non-congruence curve with the same Euclidean signature curve \cite{Hic, Mus}. Therefore, the same consequence may occur in the similarity signature curve of the system. Later, we will use the set method to further explain that for different periodic symbol sequences, their similarity signature curves will not completely coincide.


Due to constraints in each direction, the intrinsic fractal structure and self-similarity of the Lorenz attractor are formed. As the decompositions of all the motion states of the Lorenz attractor, Fig. \ref{Fig:fige6} and Fig. \ref{Fig:fige7} depict two completely different states of the similarity signature curve. One is the stable state, and the other is the mutation state. The motion behavior of the Lorenz attractor can be roughly summarized as the random switching of its trajectory between two states. When the similarity signature curve is in a stable state, its corresponding trajectory will continuously rotate around a non-zero stable point. When the state changes, its corresponding trajectory will transition to another non-zero stable point.
\begin{figure}[H]
	\centering
	\includegraphics[height=0.47\textwidth,width=1\textwidth]{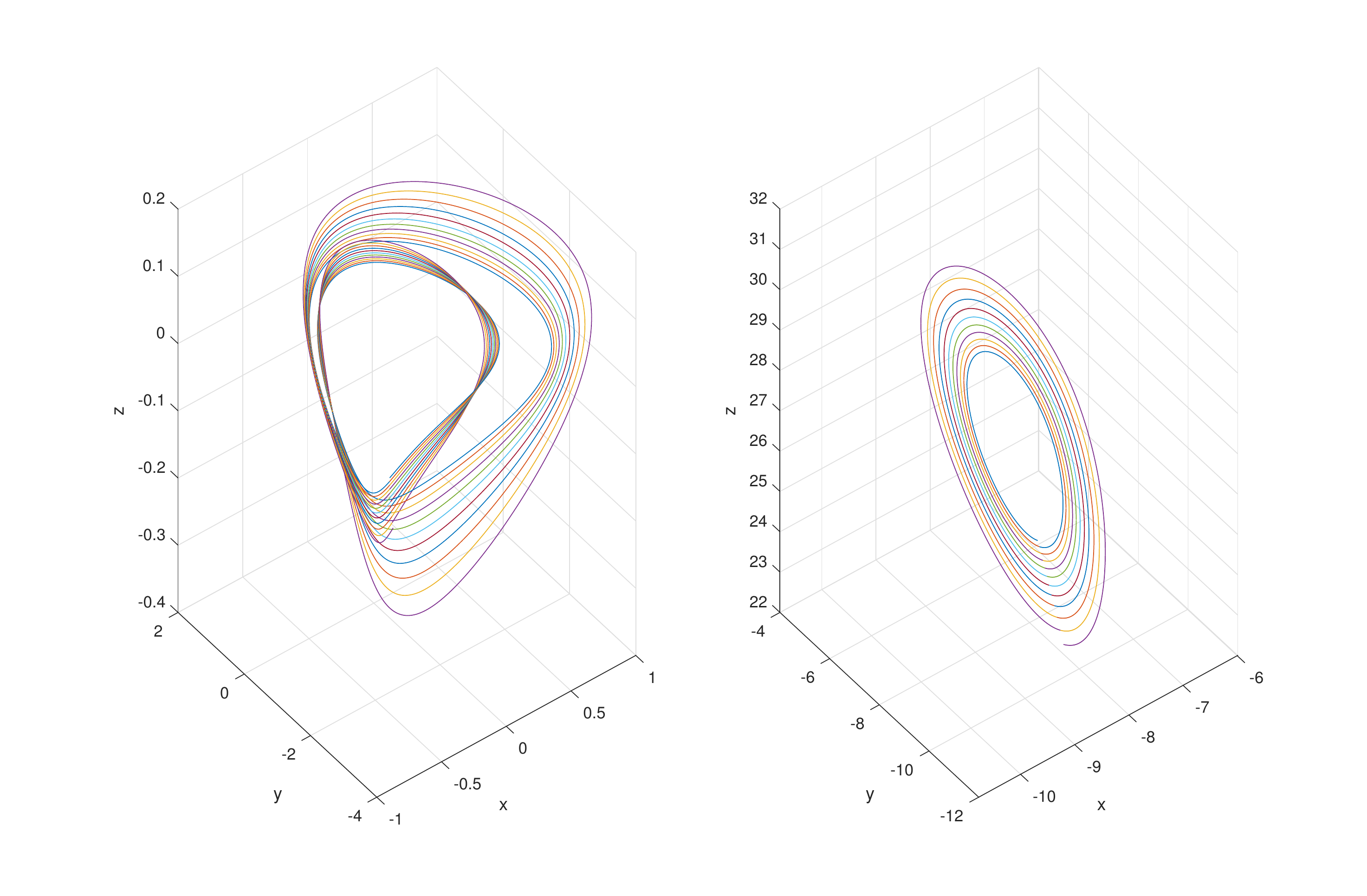}
	\caption{The Lorenz attractor with self similarity and its similarity signature curve}
	\label{Fig:fige5}
\end{figure}



\section{Method of finding periodic orbits}
The existence of periodic orbits in the Lorenz attractor has been strictly proved  \cite{exi}. On the basis that the trajectories of the Lorenz system never intersect, we propose a theorem and an algorithm to illustrate the feasibility of using the similarity signature curve to find periodic orbits. The algorithm combines the sliding window method with the similarity signature curve and uses the Krawczy operator to verify whether there is a real periodic orbit near a quasi-periodic orbit.

\subsection{Interval methods}\

Interval methods provide simple computational tests for the uniqueness, existence, and nonexistence of zeros of a map within a given interval vector \cite{qujian}. Simplification of continuous-time systems to discrete systems benefits from the Poincar\'e map. The Poincar\'e map reflects the dependence of the successor point on the predecessor point when the trajectory repeatedly crosses the same plane which can simplify the quasi-periodic orbit into a series of discrete points. The Poincar\'e map for the autonomous system is as follows.

\begin{defn}[\cite{bol}] Let ${x^*}$ be a point on a limit cycle and let $\Pi  $ be an $\left( {n - 1} \right)$-dimensional hyperplane transversal to $\Gamma $ at ${x^*}$. The	trajectory emanating from ${x^*}$ will hit $\Pi $ at ${x^*}$ in $T$ seconds, where $T$ is the minimum period of the limit cycle. Due to the continuity of ${\phi _t}$ with respect to the initial condition, trajectories starting on $\Pi $ in a sufficiently small neighborhood of ${x^*}$ will, in approximately $T$ seconds, intersect $\Pi $ in the vicinity of ${x^*}$. Hence, ${\phi _t}$ and $\Pi $ define
a mapping ${P_A}$ of some neighborhood $U \subset \Pi  $ of ${x^*}$ onto another neighborhood $V \subset \Pi  $ of ${x^*}$. ${P_A}$ is a Poincar\'e map of the autonomous system.

\begin{figure}[htpb]
\centering
\includegraphics[height=0.47\textwidth,width=1\textwidth]{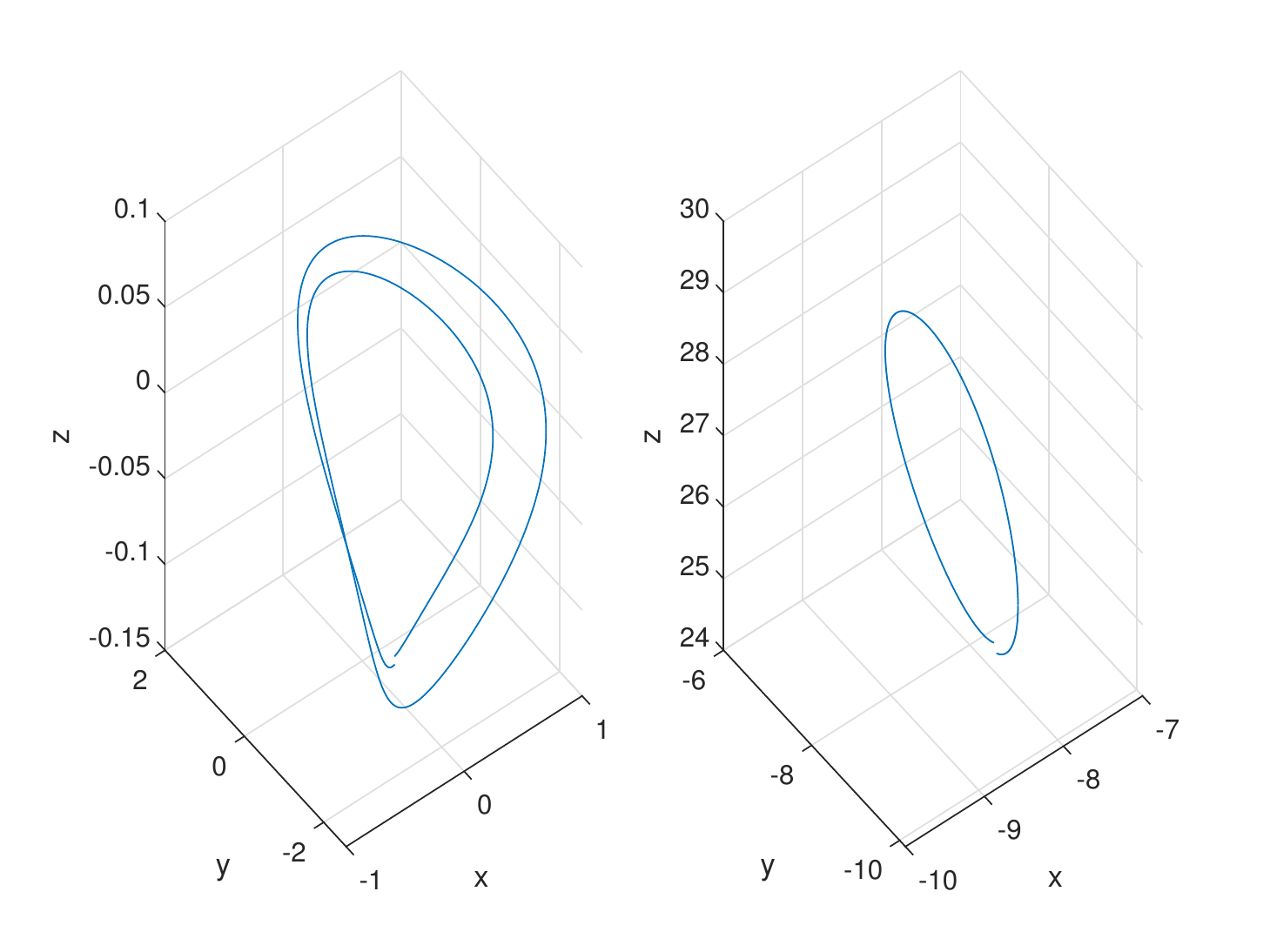}
\caption{Similarity signature curve in stable state and the trajectory of its corresponding the Lorenz attractor.}
\label{Fig:fige6}
\end{figure}

\begin{figure}[htpb]
\centering
\includegraphics[height=0.47\textwidth,width=1\textwidth]{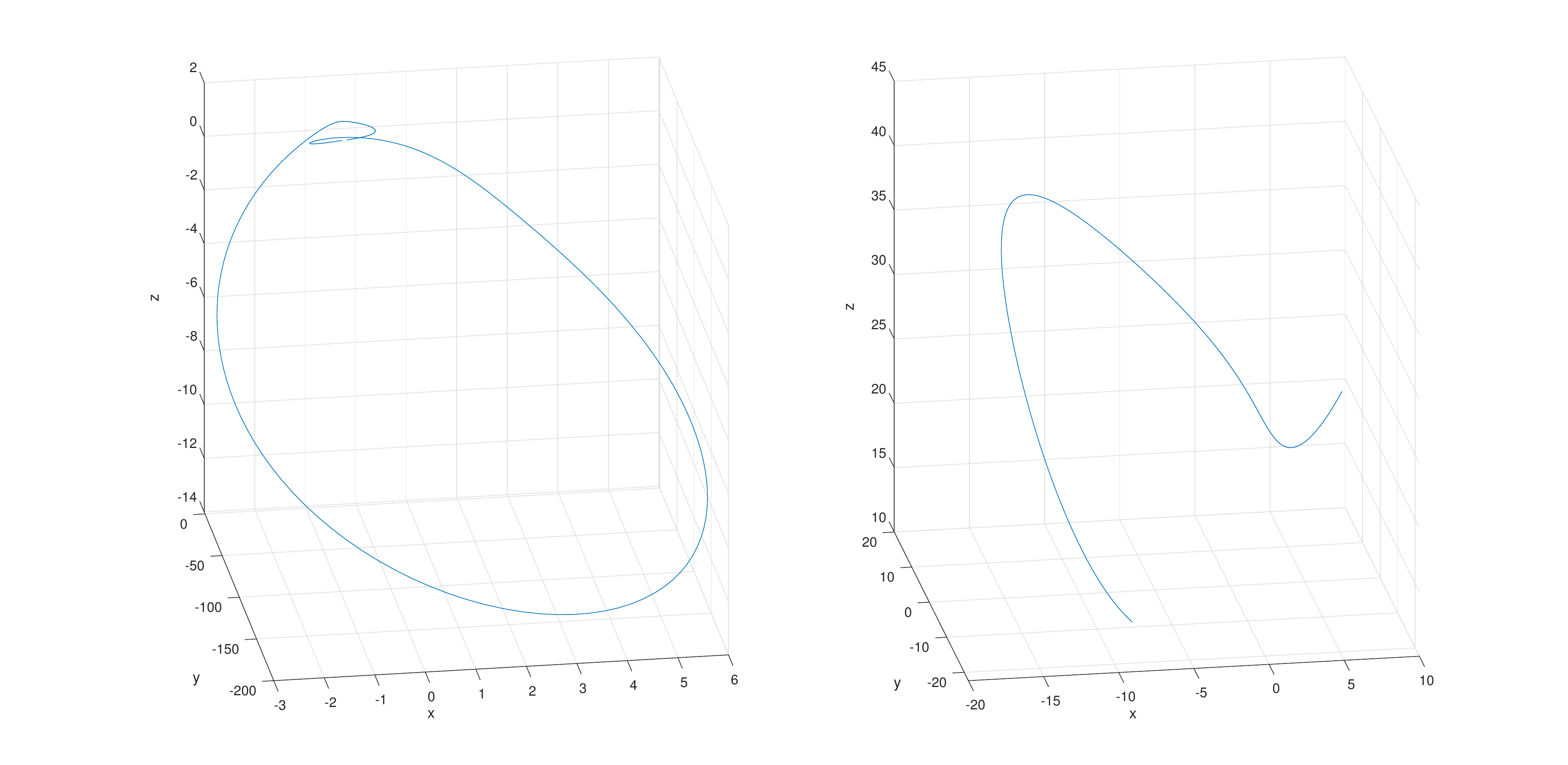}
\caption{Similarity signature curve in mutation state and the trajectory of its corresponding the Lorenz attractor.}
\label{Fig:fige7}
\end{figure}

\end{defn}
The Poincar\'e map can be simply called as $\Omega :\Pi  \to \Pi  $ and it is defined as $\Omega\left( x \right) = \varphi \left( {t \left( x \right),x} \right)$ where $\Pi   = {\Pi  _1} \cup  \cdots \cup {\Pi  _m}$ is the union of hyperplanes, and $t \left( x \right)$ is the return time after which the trajectory $\varphi \left( {t,x} \right)$ returns to $ \Pi  $. Periodic points of $ \Omega $ correspond to periodic orbits of the continuous system \cite{qujian}.

%
%

A map is constructed to study the existence of period-$p$ orbits of $\Omega$:
\begin{equation*}
{\left[ {F\left( z \right)} \right]_k} = {x^{\left( {\left( {k + 1} \right)\bmod p} \right)}} - \Omega\left( {{x^k}} \right),
\end{equation*}
where $0 \le k \le p$ and $z = \left( {{x^{\left( 0 \right)}},{x^{\left( 1 \right)}}, \cdots ,{x^{\left( {p - 1} \right)}}} \right)$. Zeros of $F$ correspond to period-$p$ points of $\Omega$, i.e., $F\left( z \right) = 0$ if and only if ${\Omega^p}\left( {{x^{\left( 0 \right)}}} \right) = {x^{\left( 0 \right)}}$.

The interval method can test the uniqueness, existence, and non-existence of the zeros of the Poincar\'e map, so it can be used to find the periodic solution of the Lorenz system on the Poincar\'e section. The Krawczyk operator \cite{Kra,Neu} is:
\begin{equation*}
K\left( \mathbf{z} \right) = \hat z - MF\left( {\hat z} \right) - \left( {MF'\left( \mathbf{z} \right) - I} \right)\left( {\mathbf{z} - \hat z} \right),
\end{equation*}
where $\hat z \in \mathbf{z}$, $F\left( {\hat z} \right)$ is the interval matrix containing the Jacobian matrices $F\left( {\hat z} \right)$ for all $ z \in \mathbf{z}$, and $M$ is an invertible matrix. Using the method given in  \cite{flo}, the map $\Omega\left( x \right)$ and its Jacobian matrix in the Krawczyk operator can be obtained. Generally, we take $\hat{z}$ as the center of $\mathbf{z}$ and ${M}$ as the inverse of $F'\left( z \right)$. If $ \mathbf{z} \cap K\left( \mathbf{z} \right) = \phi $, then there are no period-$p$ orbits in $ \mathbf{z} $. If $K\left( \mathbf{z} \right) \subset {\mathop{\rm int}}  \mathbf{z}$, there exists precisely one period-$p$ orbit in $\mathbf{z}$ \cite{Phy}.

\subsection{Periodic orbits by the symbolic dynamics based approach}\

The following theorem guarantees the feasibility of the algorithm proposed in this paper.
\begin{thm}
For two different periodic orbits $P_1$ and $P_2$, their periodic symbol sequences are ${S_1} = \left( {{s_{11}},{s_{12}}, \ldots ,{s_{1{n_1}}}} \right)$, ${S_2} = \left( {{s_{21}},{s_{22}}, \ldots ,{s_{2{n_2}}}} \right)$, $1 < {n_2} < {n_1}$, and $S_2$ is a ordered subset of $S_1$. The similarity signature curve of $P_2$ will not completely coincide with the similarity signature curve of $P_1$.
\end{thm}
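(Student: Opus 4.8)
The plan is to argue by contradiction: I would suppose that the similarity signature curves coincide completely, $\Sigma_{P_1}=\Sigma_{P_2}$ as subsets of $\R^3$, and extract a contradiction from the symbolic data $1<n_2<n_1$ together with $S_2$ being a proper ordered subset of $S_1$. Two structural facts drive the argument. First, by the signature equivalence principle recalled in the Introduction \cite{pro}, a complete coincidence of the two signature images forces a pointwise matching of the similarity differential invariants $(\tilde{\kappa},\tilde{\kappa}_{\tilde{s}},\tilde{\tau})$ realized along $P_1$ and along $P_2$; under the nondegeneracy condition $|z_t\wedge z_{tt}|\neq0$ used to build the moving frame \eqref{mfv}, each orbit maps to its signature as an immersed closed curve, so the coincidence yields a local similarity correspondence between arcs of $P_1$ and arcs of $P_2$. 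Second, because the Lorenz flip $(x,y,z)\mapsto(-x,-y,z)$ is a rotation by $\pi$ about the $z$-axis and hence lies in $\mathrm{SE}(3)\subset\mathrm{Sim}(3)$, a loop about either nonzero equilibrium contributes the \emph{same} signature arc. Thus the individual letters of the symbol sequence are invisible to $\Sigma$, and what survives in the signature is precisely the cyclic pattern of transitions between successive letters.

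With these in hand, the heart of the matter is to show that the transition data of $S_1$ and of $S_2$ cannot produce the same signature set. I would encode each orbit by its cyclic string of consecutive pairs $s_{ij}s_{i(j+1)}$, separating the ``stable'' repeats from the wing-switching ``mutations'' displayed in Fig.~\ref{Fig:fige6} and Fig.~\ref{Fig:fige7}; each transition type is expected to deposit a characteristic feature of $\Sigma$ (a closed loop for a stable spiral, a near-cusp at a mutation). Since $S_2$ is a proper ordered subset of $S_1$ with $n_2<n_1$, the two cyclic transition multisets differ, and the argument then splits into a dichotomy. Either some transition feature present in $\Sigma_{P_1}$ is genuinely absent from $\Sigma_{P_2}$, in which case $\Sigma_{P_1}\neq\Sigma_{P_2}$ outright; or the images coincide despite the differing transition content, which forces a redundancy, namely two distinct arcs of $P_1$ carrying identical invariant triples. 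In the latter case the reconstruction of a curve from its similarity invariants and initial frame (the converse side of the moving-frame construction) would produce two coincident sub-orbits of $P_1$, contradicting the fact, used throughout Section~\ref{sec-back}, that trajectories of the Lorenz system never intersect.

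The step I expect to be the main obstacle is exactly the one flagged by the remark on Hickman's construction \cite{Hic, Mus}: a priori two non-congruent curves may share an identical signature, so coincidence of $\Sigma_{P_1}$ and $\Sigma_{P_2}$ does not by itself upgrade the local similarity correspondence to a global identification, and one must also control the similarity transformation connecting the matched arcs before non-intersection can be invoked. I would address this by leaning on the dynamical rigidity recorded in Section~\ref{sec-back} rather than on the signature alone: the non-intersection property, the strong contraction onto the two-dimensional attractor, and the monotone per-turn expansion multiplier $\Lambda^1$ together pin down each loop's geometry from its neighbours in the symbol string, so that a local matching is forced to globalize into coincident sub-orbits. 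The genuinely quantitative part, where the real work lies, is to make ``characteristic transition feature'' precise — for instance by exhibiting an explicit functional of $(\tilde{\kappa},\tilde{\kappa}_{\tilde{s}},\tilde{\tau})$ that counts mutations — which I would carry out with the ``set method'' announced before the statement, comparing $\Sigma_{P_1}$ and $\Sigma_{P_2}$ directly as point sets.
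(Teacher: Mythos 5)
Your proposal takes a route that is fundamentally different from the paper's, and it contains genuine gaps at exactly the points where the work would have to be done. The paper does not argue through signature-equivalence theory at all: its proof is symbolic-dynamical. It invokes Tucker's result \cite{tuck} that for a given periodic symbol sequence there is \emph{at most one} point $x\in N$ with that sequence satisfying $R^p(x)=x$, and then runs a case analysis on how $S_2$ sits inside $S_1$ (as a prefix/suffix, $S_1=(S_2,S_3)$ or $S_1=(S_3,S_2)$, or in the interior). The unique periodic point carrying $S_2$ would then have to be simultaneously a fixed point of $R^{n_2}$ and of $R^{n_1}$, i.e.\ lie on both orbits, which is incompatible with the disjointness of distinct trajectories; the invariance and uniqueness of the similarity signature along each orbit then rules out complete coincidence of the signature curves. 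This sidesteps every differential-geometric difficulty that your plan has to confront head-on.

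The concrete gaps in your plan are these. First, your entire first branch rests on an ``explicit functional of $(\tilde{\kappa},\tilde{\kappa}_{\tilde{s}},\tilde{\tau})$ that counts mutations,'' which you yourself flag as ``where the real work lies'' and never construct; without it the dichotomy has no content, since nothing certifies that differing cyclic transition multisets produce differing signature \emph{images} as point sets. Second, your second branch concludes that two distinct arcs of $P_1$ carrying identical invariant triples must reconstruct to \emph{coincident} sub-orbits, but reconstruction from similarity invariants determines an arc only up to an element of $\mathrm{Sim}(3)$: two matched arcs related by a nontrivial similarity are disjoint point sets, so the non-intersection of Lorenz trajectories is never violated and no contradiction results. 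This is precisely the failure mode exhibited by the non-congruent curves with identical signatures in \cite{Hic, Mus}, which you cite but do not neutralize -- the appeal to ``dynamical rigidity'' (contraction onto the attractor, the multiplier $\Lambda^1$) is an expectation, not an argument, and no step is given that pins the connecting similarity to the identity. Third, even your starting point overreaches: set-theoretic coincidence $\Sigma_{P_1}=\Sigma_{P_2}$ does not by itself yield the arcwise parametrized correspondence you assume, absent regularity hypotheses on the signature map \cite{pro} that the Lorenz orbits are not shown to satisfy. In short, the proposal is a program whose two load-bearing steps are both left open, whereas the paper's proof replaces them with a single citable uniqueness theorem from symbolic dynamics.
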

\begin{proof}
For a given periodic symbol sequence $S_0 = \left( {{s_0},{s_1}, \ldots ,{s_{p - 1}}} \right)$, there exists at most one point $x \in N$ with the symbol sequence $S$ such that ${R^p}\left( x \right) = x$ \cite{tuck}. Then periodic orbits with the same sequence of periodic symbols will not be strictly similar. If the long periodic orbits do not contain short periodic orbits, it means that there will be no periodic orbit with period symbol $S_2$ in the long period orbit, so this theorem is correct. Otherwise, we will face the following two cases.

(1) $P_1$ starts with $P_2$ or ends with $P_2$, that is, ${S_1} = \left( {{S_2},{s_{{n_2} + 1}}, \cdots ,{s_{{n_1}}}} \right)$ or ${S_1} = \left( {{s_{11}}, \cdots ,{s_{{n_1} - {n_2}}},{S_2}} \right)$.  For the given $S_2$, there exists at most one point $x \in N$ with the symbol sequence $S_2$ such that ${R^{{n_2}}}\left( x \right) = x$. The point $x$ exists in the trajectory satisfying the periodic symbol sequence and belongs to the intersection of attractor and trapping region $N$,  then $x$ also satisfies ${R^{{n_1}}}\left( x \right) = x$. The part of ${S_1}$ excluding ${S_2}$ can be regarded as a whole, which is called ${S_3}$, that is, ${S_1} = \left( {{S_2},{S_3}} \right)$ or ${S_1} = \left( {{S_3},{S_2}} \right)$.  ${S_3}$ is also a sequence of periodic symbols, where ${S_3} = \left( {{s_{{n_2} + 1}}, \cdots ,{s_{{n_1}}}} \right)$ or ${S_3} = \left( {{s_{11}}, \cdots ,{s_{{n_1} - {n_2}}}} \right)$.

(2) $P_2$ is included in $P_1$, that is, ${S_1} = \left( {{s_{11}}, \cdots, {s_{1k}}, {S_2}, {k + {n_2} + 1 }, \cdots ,{s_{{n_1}}}} \right)$, and then the proof process is similar as the above case.
\end{proof}

In \cite{tuck}, it was established that there existed a finite set of rectangles $N = {N^ - } \cup {N^ + } \subset \Pi $ such that $N$ is mapped into itself by the return map $R:\Pi \to \Pi  $.  They formed the forward invariant set $\mathscr{A}= \bigcap\nolimits_{k = 0}^\infty  {{P^k}\left( N \right)} $. In fact, $\mathscr{A}$ is the intersection of the Lorenz attractor with $N$. For a trajectory $\left\{ {{x_n}} \right\}_{n = 0}^\infty  \subset N$ where ${x_{n + 1}} = R\left( {{x_n}} \right)$.
Suppose that $x'$ is the similarity signature of $x$, and we need at least 4 points to approximate $x'$. The points in the Lorenz attractor will not intersect. According to the continuity of  the Lorenz system, we know that the similarity signature curve of the Lorenz attractor is also continuous. When point $x$ satisfies the condition $R{^p}\left( x \right) = x$, the similarity signature of $x$ is also invariant, so there is an invariant set in the similarity signature of the Lorenz attractor. The similarity signature at this point also exists and is unique, otherwise, there will be conflicts.

\subsection{Sliding window method based on similarity signature curve}\

The sliding window method can solve the problems of finding the properties, such as the length, of a continuous interval that satisfies certain conditions. All trajectories of the Lorenz attractor are processed successively by its similarity signature curve and sliding window method, and the segmented fragments have obvious self-similarity, which is a manifestation of the fractal structure of the Lorenz attractor. It is wise to construct a real periodic orbit based on a quasi-periodic orbit but the initial value of the system limits its accuracy.  Experiments show that in the process of finding quasi-periodic orbits, the choice of initial point will affect the final result. A suitable initial point can ensure that the quasi-periodic orbits are similarly positioned in the Lorenz system. In addition to the initial point, the appropriate window size can more intuitively reflect the relationship between the properties of the data in each window, which is helpful for the division of periodic orbits. The algorithm for finding the periodic orbits of the Lorenz system is implemented according to the rules proposed above.

The algorithm simplifies the problem of finding the periodic orbits of the Lorenz system into the search and combination of two different states of the trajectory. For purposes of clarity, we generate the following pseudo-code and algorithm structure.

\begin{algorithm}
\renewcommand{\algorithmicrequire}{\textbf{Input:}}
\renewcommand{\algorithmicensure}{\textbf{Output:}}
\caption{Sliding window method}
\begin{algorithmic}[1]
\REQUIRE The Lorenz system dataset $D$; Window size $W$; Initial point ${P_{1,1}}$
\ENSURE Periodic orbit segmentation points $z$
\STATE Initialize $z = \left[ {{P_{1,1}}} \right]$, $poi = 0$, $i = 1$;
\STATE Compute $\tilde \kappa$, ${\tilde \kappa _{\tilde s}}$ and $\tilde \tau$ of the Lorenz system;
\WHILE{$poi + W \leqslant length\left( D \right)$}
\FOR{$j = 1 : W$}
\STATE Compute $d\left( j \right) = dist\left( {{P_{i,1}},{P_{i,j}}} \right) $;
\STATE Compute $\left[ {val,index} \right] = \min d\left( j \right)$;
\STATE Compute ${P_{i + 1,1}} = {P_{i,index}}$;
\STATE $z = \left[ {z,{P_{i + 1,1}}} \right]$;
\ENDFOR
\STATE $poi = poi + index$;
\STATE $i = i + 1$;
\ENDWHILE
\STATE \textbf{return} $z$;
\end{algorithmic}
\end{algorithm}

The similarity signature curve for the Lorenz systems is acquired through the numerical calculation by using the formulas in Sec. 2. Actually, a proper selections of the initial point and window size for the algorithm can ensure the feasibility of the algorithm. The initial point should satisfy two conditions. (1) The closest point to the initial value of the system. (2)  The extreme point of the similarity signature curve in the $z$-axis direction.
These ensure that the segmentation points on the trajectory of the Lorenz attractor are more stable. When the trajectory transitions from rotating around one stable point to another stable point, the period of its similarity signature curve is the longest, so the maximum periodic orbit length of the system should be taken into account in the selection window size. If the window size is too small, a complete period may not exist in the window. Suppose the window size is $W$ which satisfies the condition slightly smaller than the length of two periods of the similarity signature curve. Let $W=2500$, equates to about $1.25s$. If the length of the last remaining data is less than the size of the window, it will not be considered. Suppose that the initial point of each window is ${p_{i1}}\left( {1 \leqslant i \leqslant n} \right)$, and the other points in a window period are ${p_{ij}}\left( {2 \leqslant j \leqslant 2500} \right)$, where ${n}$ is equal to the number of windows required in the end. By calculating the distances between all points in the longer trajectory and the fixed initial point, we observe the existence of cycles in their distance image, as shown in Fig. \ref{Fig:fige8}. After completing the closest point search for the initial points of the similarity signature curve, the trajectory between these two points is always in the form of two circle-like nests. As the degree of the mutation of the similarity signature curve gradually increases, the amplitude of the distance map also increases, which means that the Lorenz attractor will transition from a stable state to a mutation state.
\begin{figure}[htpb]
\centering
\subfigure{\includegraphics[height=0.47\textwidth,width=0.45\textwidth]{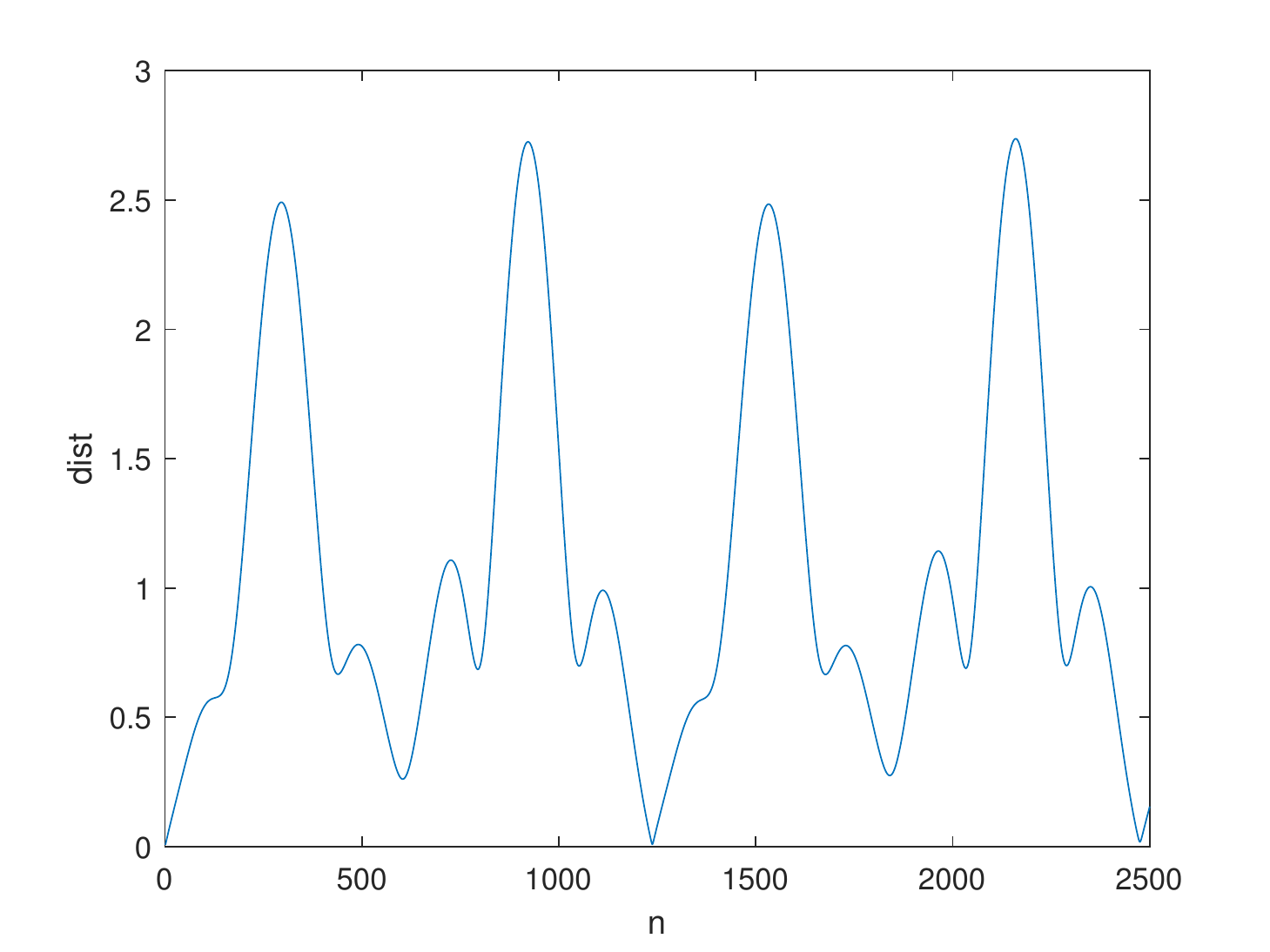}}
\subfigure{\includegraphics[height=0.47\textwidth,width=0.45\textwidth]{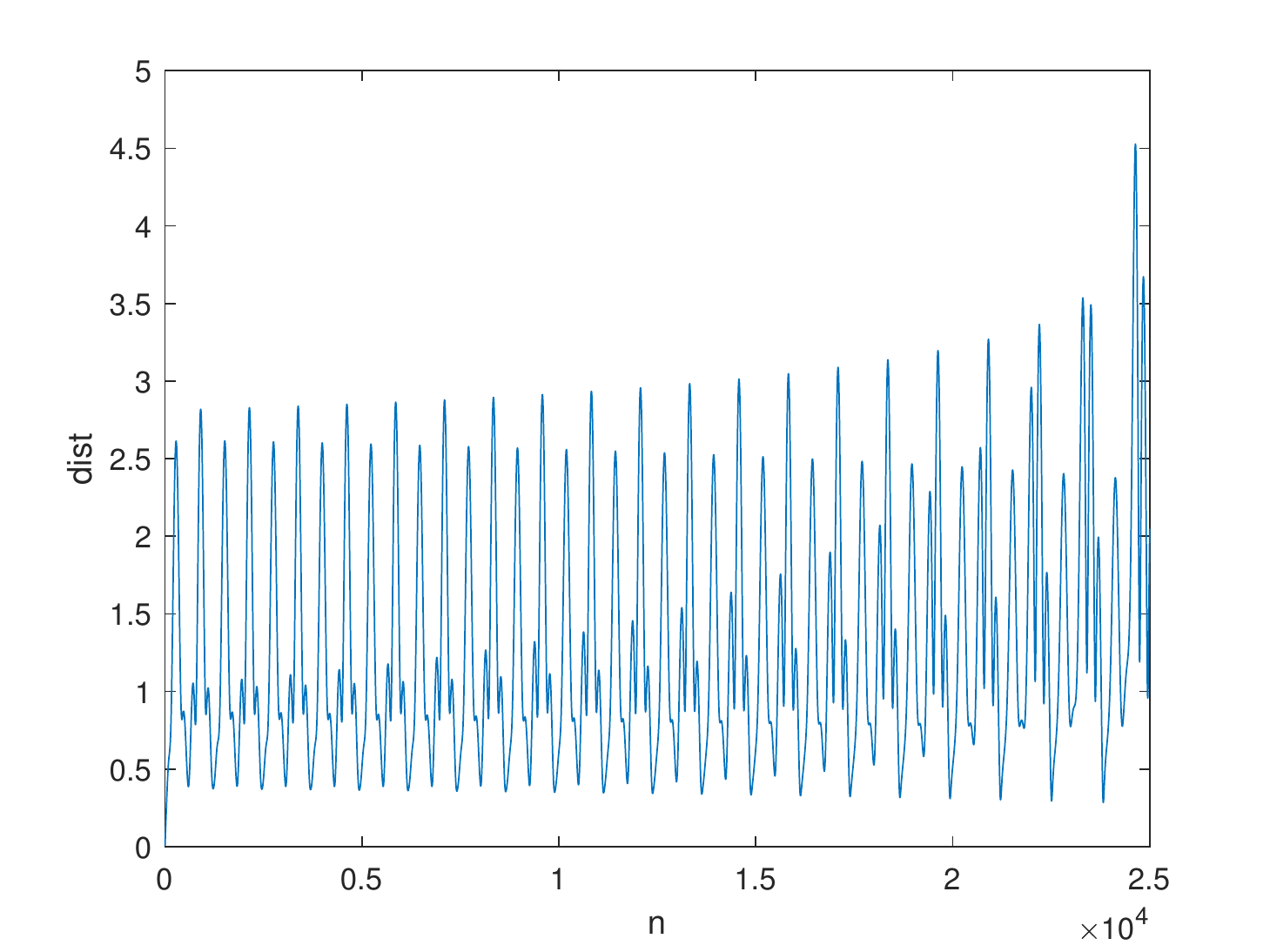}}
\caption{Distance between all points in a window and the initial point.}
\label{Fig:fige8}
\end{figure}

Second, as the trajectory continues to rotate around a non-zero stable point, the point closest to the initial point on each approximate circle in its similarity signature curve gradually moves away from the initial point. As shown in Fig. \ref{Fig:fige9}, the distance between them gradually increases. When the point ${p_{ik}}$ closest to the initial point ${p_{i1}}$ is found, the initial point needs to be updated. The point ${p_{ik}}$ will be called the new initial point, and then we recalculate the distance image and update the initial point continuously until the subsequent trajectory ends. Then point ${p_{ik}}$ is equal to point ${p_{i+1,1}}$. This part is a detailed explanation of the loop part in the pseudocode.
\begin{figure}[htpb]
\centering
\includegraphics[height=0.47\textwidth,width=0.5\textwidth]{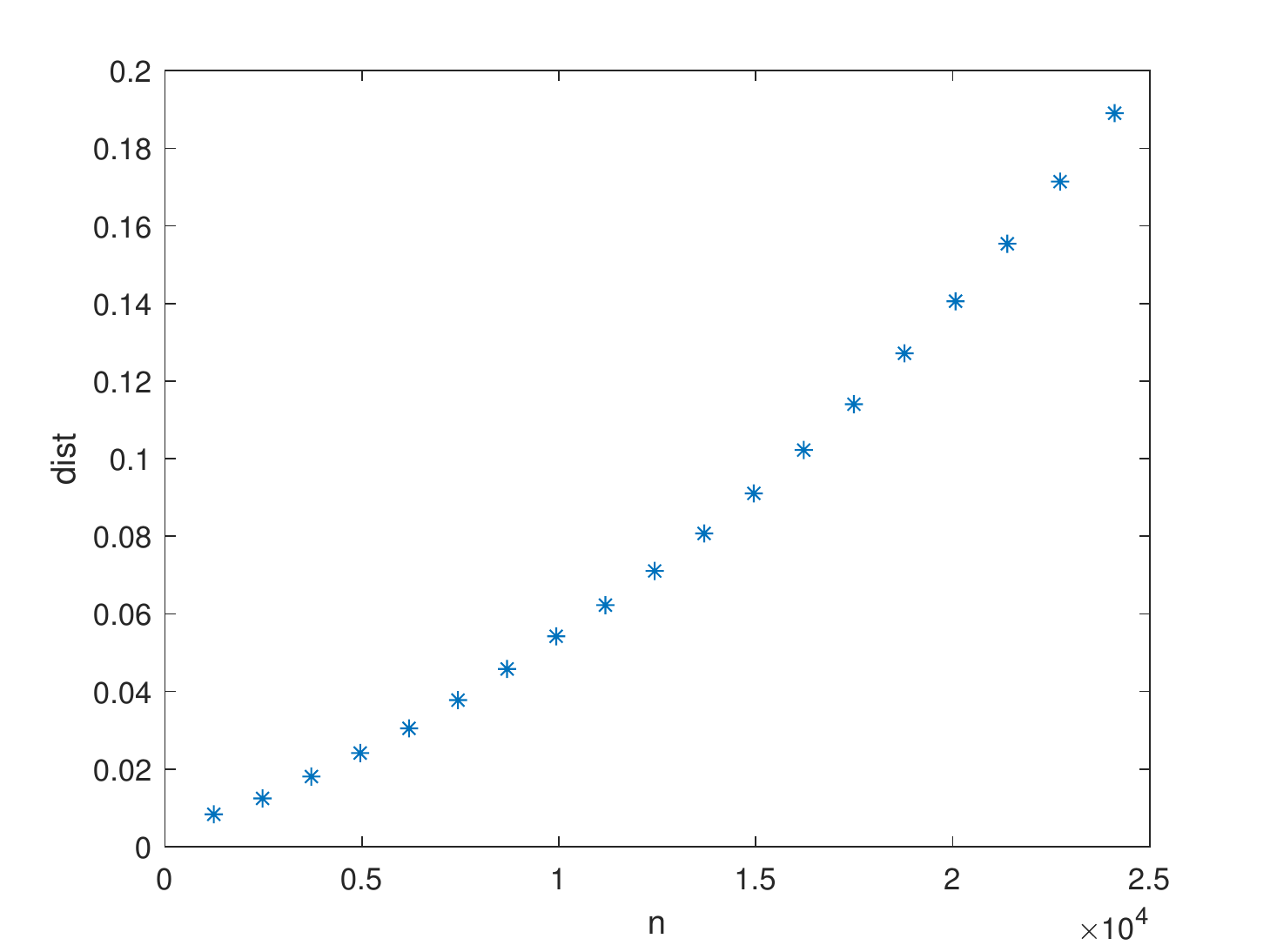}
\caption{The points in the graph represent the distance between the points closest to the starting point in each cycle.}
\label{Fig:fige9}
\end{figure}

Third, a sequence of points $p = \left\{ {{p_{11}},{p_{21}}, \cdots ,{p_{n1}}} \right\}$ is obtained by looping, where $n$ is the number of segmentation points and  windows required. Filtering segmentation points leaves only points along the same direction of the $z$-axis. The filtered points form a new point sequence, which is assumed to be $P = \left\{ {{P_{11}},{P_{21}},...,{P_{k1}}} \right\}$, where point ${P_{i1}}$ is equivalent to point ${p_{k1}} \in p$. The trajectory between every two points in the point sequence $P$ may be a quasi-periodic orbit. Except for the first and last points in the sequence, the other points are the end of one orbit and the beginning of the next. The endpoints of the quasi-periodic orbits obtained by applying the similarity signature curve can be kept within the neighborhood of 0.0005.

Finally, the periodic orbits are closed. This method is based on the existence of a real periodic orbit in a neighborhood of the quasi-periodic orbit \cite{close}. Table \ref{table1} gives the data for quasi-periodic orbits with periods not greater than 5.

In short, there are three advantages of using the similarity signature curve to find periodic orbits: (1) It can reflect the self-similarity of the Lorenz system from the perspective of invariants; (2) High precision is achieved by common methods, which is helpful for quasi-periodic orbits positioning; (3) The order of the periodic orbits is guaranteed and no omission occurs.

\begin{table}[htpb]
\centering
\setlength{\abovecaptionskip}{0pt}
\setlength{\belowcaptionskip}{10pt}
\caption{Data of periodic orbits with period $p \leqslant 5$, including the coordinates of the segmentation points and the distance between the endpoints.}
\renewcommand{\arraystretch}{1.4} 
\setlength{\tabcolsep}{0.25cm}{  
\begin{tabular}[htbp]{lllll}
	\hline
	$s$ & $x$ & $y$ & $z$ & dist \\
	\hline
	LR & -9.727902750 & -16.398960521 & 17.292212679 & 0.002599810\\
	LLR & -9.836367146 & -16.255530452 & 18.121784817 & 0.002386087\\
	LLLR & -9.835075562 & -16.131007675 & 18.411435859 & 0.003081481\\
	LLRR & -9.917578308 & -15.670162223 & 19.764832024 & 0.003614604\\
	LLLLR & -9.918073021 & -15.155786811 & 20.761179930 & 0.001839419\\
	LLLRR & -9.849285414 & -15.553696219 & 19.723034119 & 0.002946679\\
	LLRLR & -9.803205079 & -16.451041061 & 17.539781868 & 0.003687509\\ \hline
\end{tabular}}
\label{table1}
\end{table}

\subsection{Method summary}\

In this method, quasi-periodic orbits are obtained by dividing and recombining the Lorenz attractor by its similarity signature curve. A better approximation of the position of the periodic orbit near the quasi-periodic orbit is found by standard (non-interval) Newton iteration. Finally, the existence of real periodic orbits can be proved by using the Krawczyk operator.

\section{Periodic orbits for the Lorenz system}
According to the algorithm proposed in the previous section, a sequence $P = \left( {{P_{11}},{P_{21}}, \cdots ,{P_{k1}}} \right)$ is obtained, where $P_{i1}$ is the point in the Lorenz attractor and is also the starting and ending points of orbits. The standard Newton method contributes to sharpening the approximation and thus obtaining quasi-periodic trajectories.  Finally, the Krawczyk operator is used to prove the existence of real periodic orbits in the neighborhood of their quasi-periodic orbits and all periodic orbits with period $p \leqslant 8$ are found. The orange part in Fig. \ref{Fig:fige10} is composed of the segmentation points of each periodic orbit. The positions of the segmentation points exhibit similarity due to the invariance to scaling of the similarity signature curve. Each trajectory is labeled in the following ways. Let $q$ be the first intersection of the two-dimensional flow and the return plane $\Pi $. When the trajectory intersects the plane to the left of $q$, it is recorded as L, otherwise, recorded as R. Period-$p$ orbits can be better represented using the sign sequence $S = \left( {{s_1},{s_2}, \cdots ,{s_p}} \right)$, for ${s_i} = L$ or ${s_i} = R$, for $i = 1,2, \cdots ,p$.
\begin{figure}[htpb]
\centering
\includegraphics[height=0.47\textwidth,width=1\textwidth]{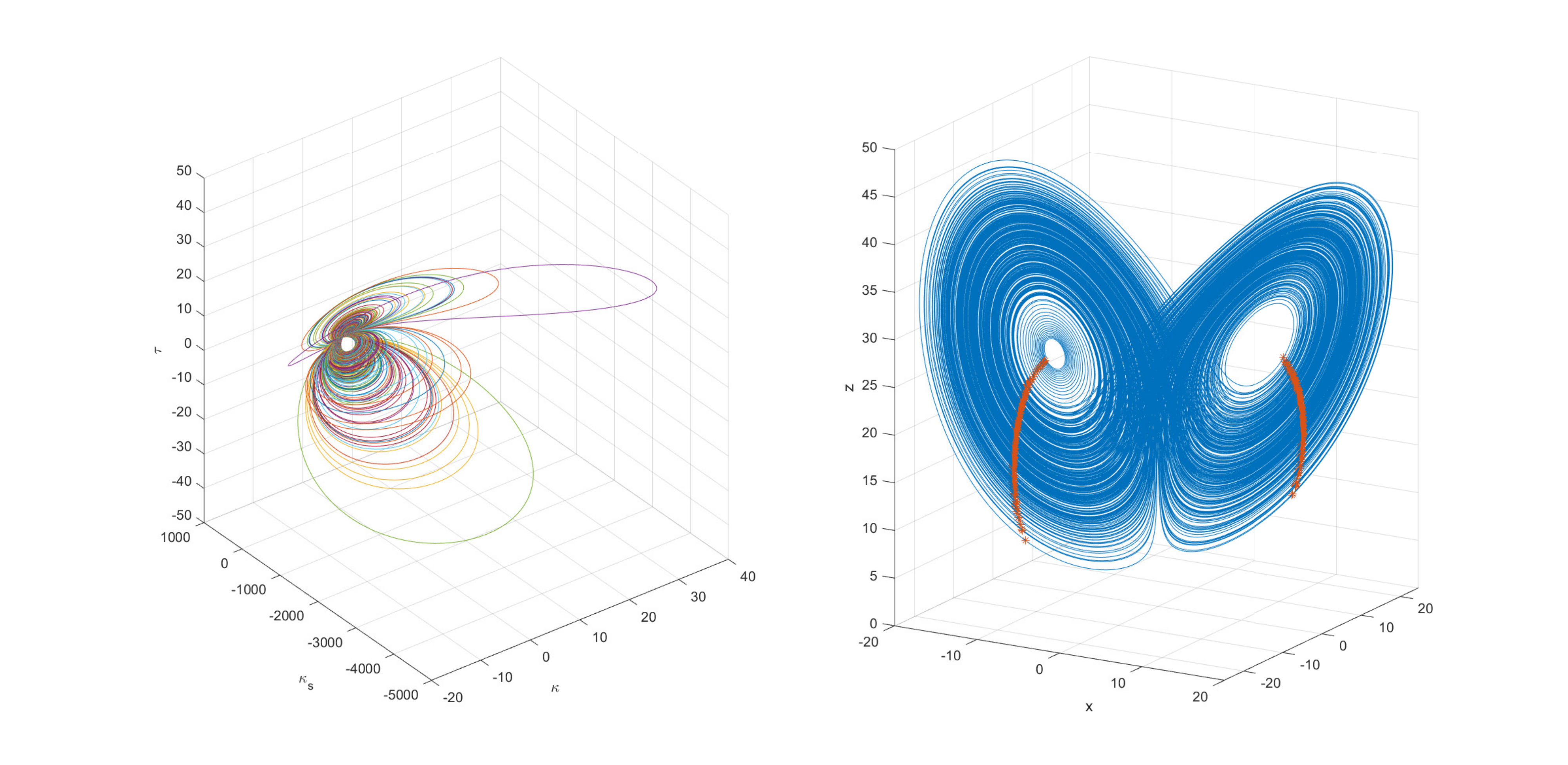}
\caption{Divided similarity signature curve and the positions of the segmentation points on the Lorenz attractor. }
\label{Fig:fige10}
\end{figure}

There are infinitely long periodic orbits in the Lorenz system, and these long-period orbits can be composed of multiple short-period orbits. Only the coprime periodic orbits are considered when recording periodic orbits. If a sequence is a periodic orbit, only its non-repetitive serial number is recorded as the equivalent form of the periodic orbit.

The one-dimensional symbolic dynamics of the Lorenz system are established to show all the found periodic orbits with periods no greater than 8. We give the period $p$, the period length $T$, and the corresponding symbol sequence $s$ for each periodic orbit discovered. The results are reported in Table \ref{table2}. Fig. \ref{Fig:fige11} illustrates the periodic orbits with period $p \leqslant 6$. For each pair of symmetric orbits, only periodic orbits with a greater number of L than R are plotted.

\begin{table}[htpb]
\centering
\setlength{\abovecaptionskip}{0pt}
\setlength{\belowcaptionskip}{10pt}
\caption{Short periodic orbits, where $p$ is the period of the orbit, $T$ is the flow-time, $s$ is the corresponding symbol sequence.}
\renewcommand{\arraystretch}{1.2} 
\setlength{\tabcolsep}{0.3cm}{  
	\begin{tabular}[htbp]{llllll}
		\hline
		$p$ & ${T}$ & $s$ & $p$ & ${T}$ & $s$\\
		\hline
		2 & 1.55865 & LR & 7 & 5.39421 & LLRLLRR \\
		3 & 2.30591 & LLR & 7 & 5.42912 & LLRLRLR \\
		4 & 3.02358 & LLLR & 8 & 5.78341 & LLLLLLLR \\
		4 & 3.08428 & LLRR & 8 & 5.92499 & LLLLLLRR \\
		5 & 3.72564 & LLLLR & 8 & 5.99044 & LLLLLRRR \\
		5 & 3.82025 & LLLRR & 8 & 5.99732 & LLLLLRLR \\
		5 & 3.86953 & LLRLR & 8 & 6.01003 & LLLLRRRR \\
		6 & 4.41776 & LLLLLR & 8 & 6.03523 & LLLLRLLR \\
		6 & 4.53410 & LLLLRR & 8 & 6.08235 & LLLLRLRR \\
		6 & 4.56631 & LLLRRR & 8 & 6.08382 & LLLLRRLR \\
		6 & 4.59381 & LLLRLR & 8 & 6.10805 & LLLRLRRR  \\
		6 & 4.63714 & LLRLRR & 8 & 6.12145 & LLLRLLRR  \\
		7 & 5.10303 & LLLLLLR & 8 & 6.12233 & LLLRRLLR \\
		7 & 5.23419 & LLLLLRR & 8 & 6.13512 & LLLRRLRR \\
		7 & 5.28634 & LLLLRRR & 8 & 6.15472 & LLLRLRLR \\
		7 & 5.30120 & LLLLRLR & 8 & 6.17587 & LLRLLRLR \\
		7 & 5.33091 & LLLRLLR & 8 & 6.18751 & LLRLRRLR \\
		7 & 5.36988 & LLLRLRR & 8 & 6.19460 & LLRLRLRR \\
		7 & 5.37052 & LLLRRLR &  &  & \\
		\hline
\end{tabular}}
\label{table2}
\end{table}

\begin{figure}[htpb]
\centering
\subfigure[p=2, LR]{\includegraphics[height=0.3\textwidth,width=0.3\textwidth]{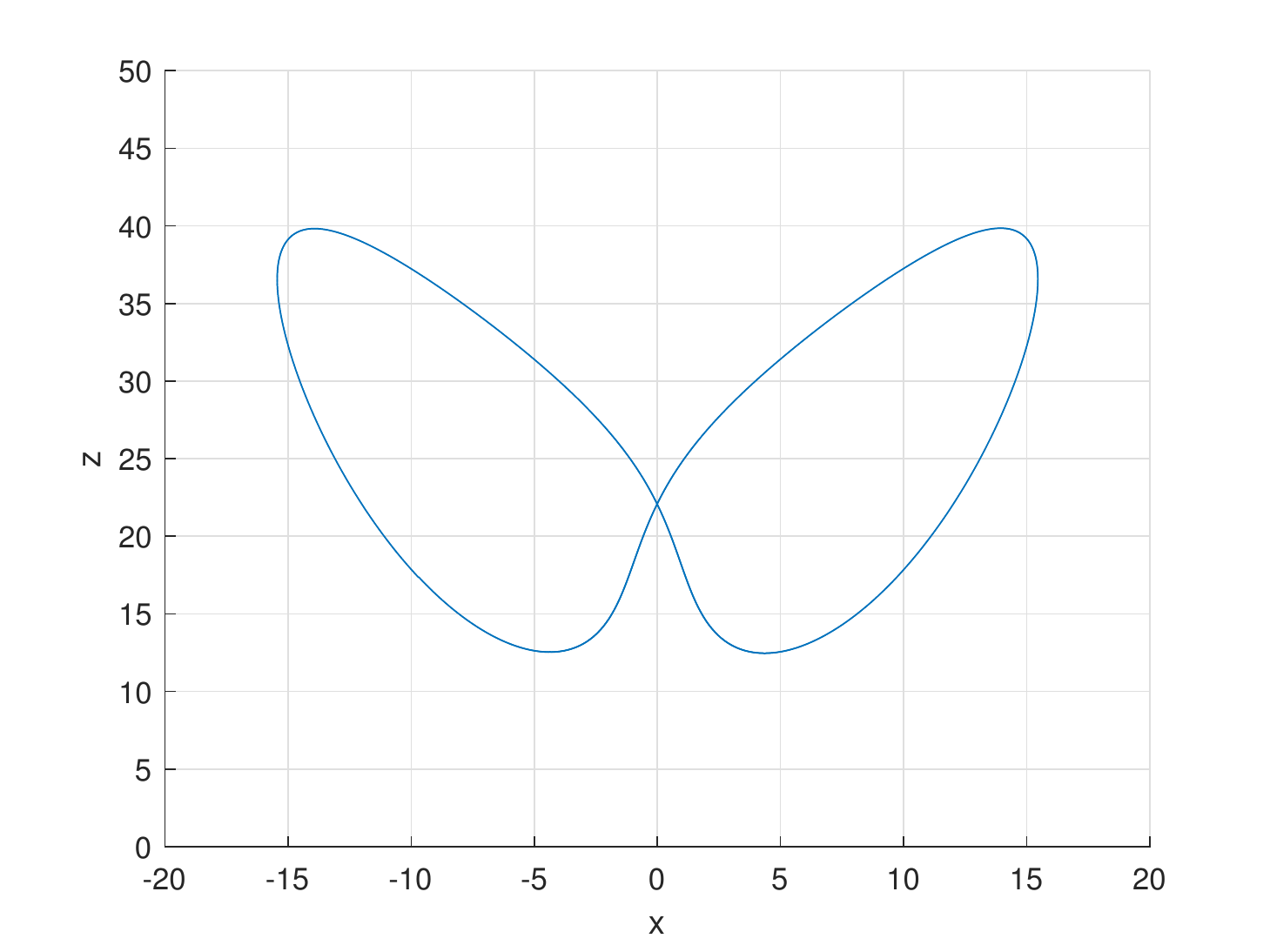}}
\subfigure[p=3, LLR]{\includegraphics[height=0.3\textwidth,width=0.3\textwidth]{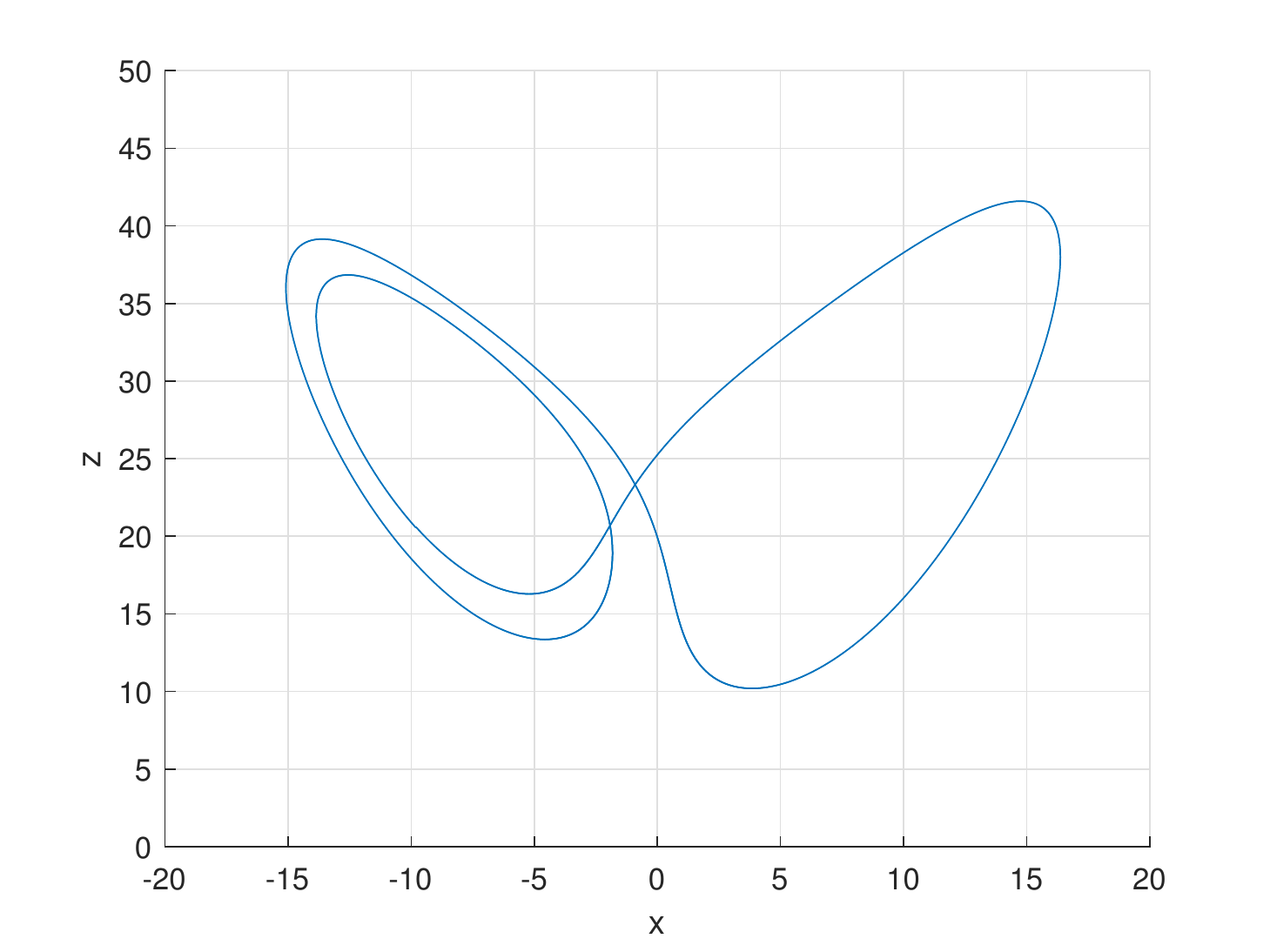}}
\subfigure[p=4, LLRR]{\includegraphics[height=0.3\textwidth,width=0.3\textwidth]{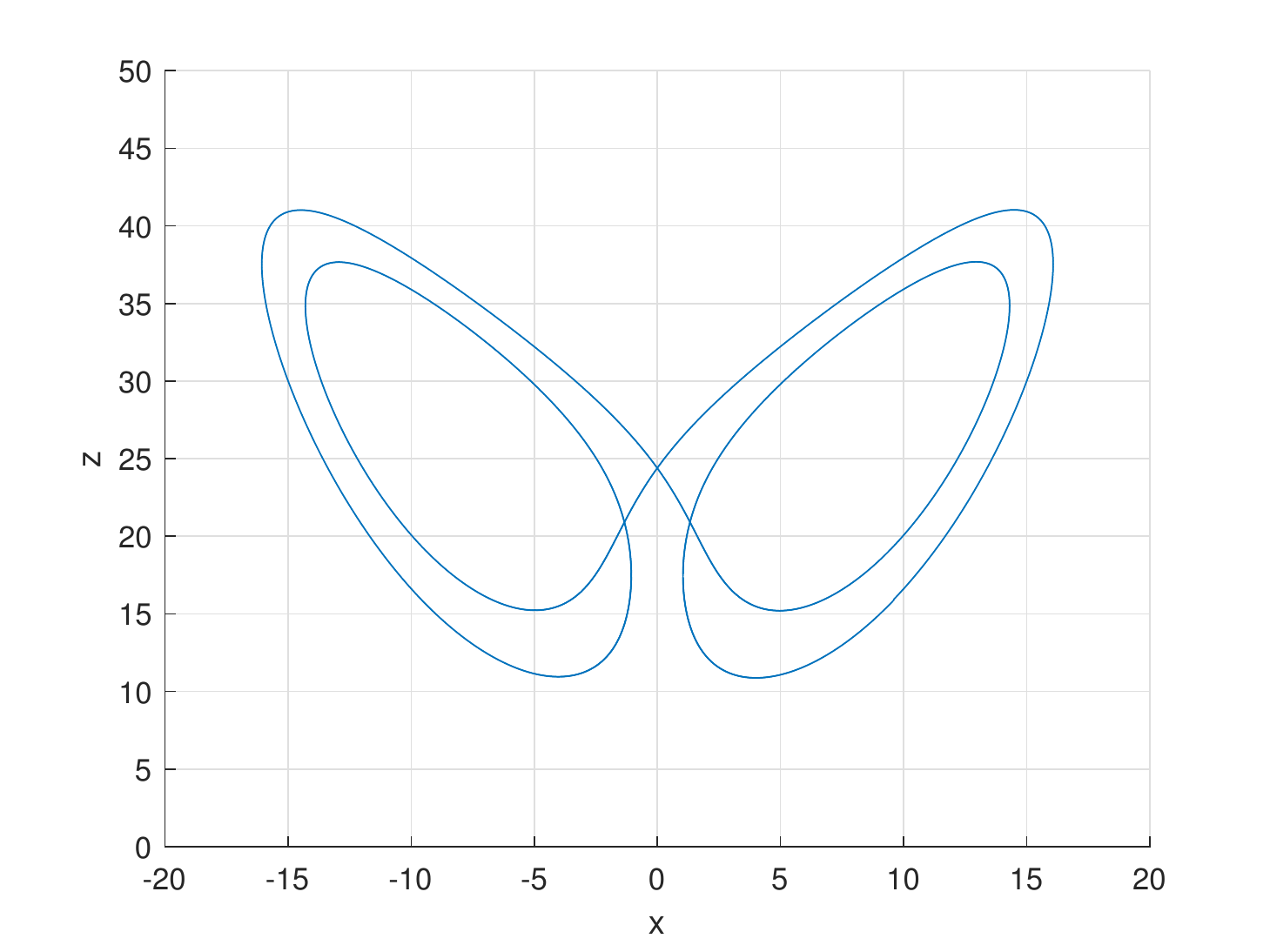}}
\subfigure[p=4, LLLR]{\includegraphics[height=0.3\textwidth,width=0.3\textwidth]{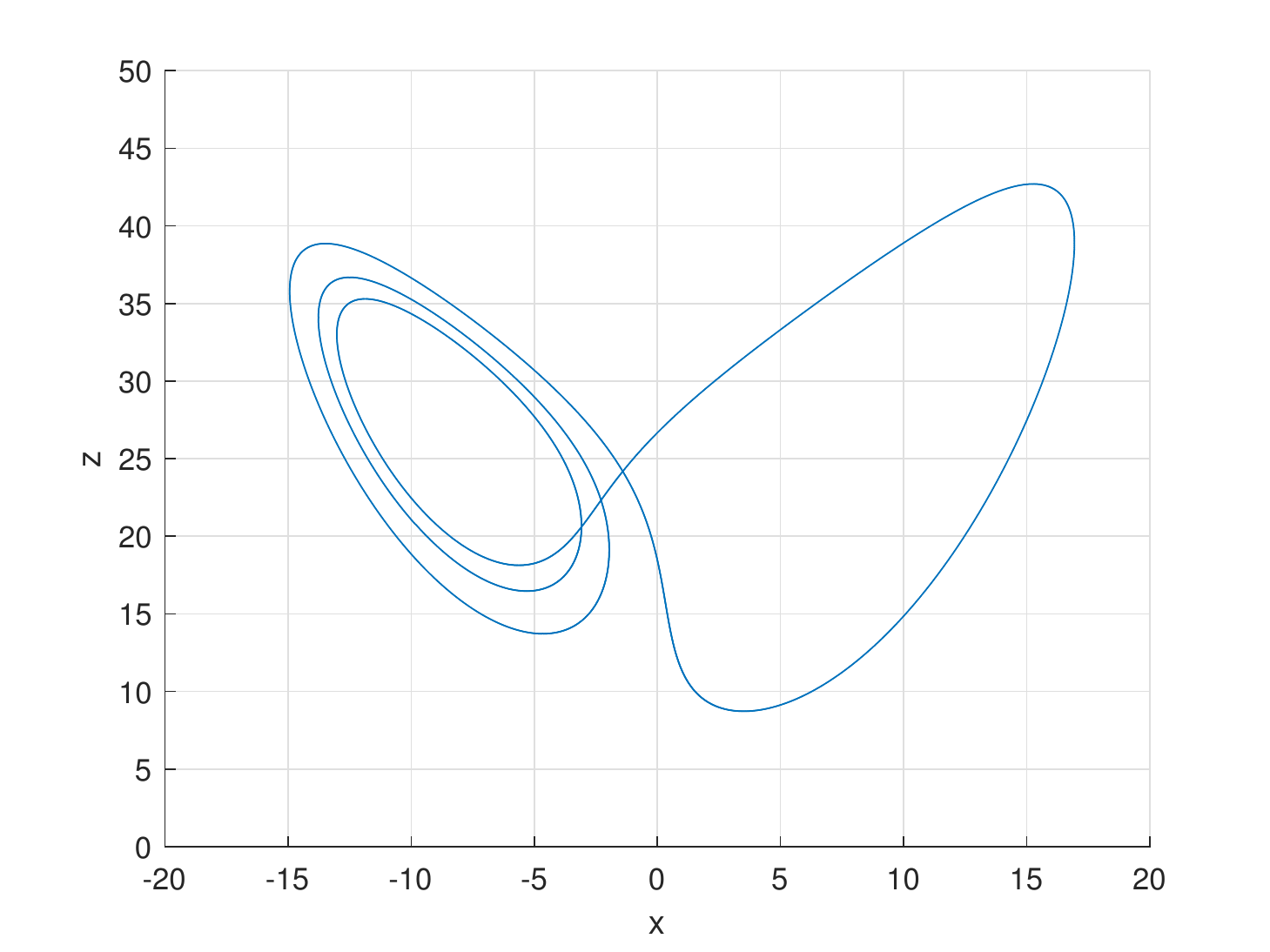}}
\subfigure[p=5, LLLRR]{\includegraphics[height=0.3\textwidth,width=0.3\textwidth]{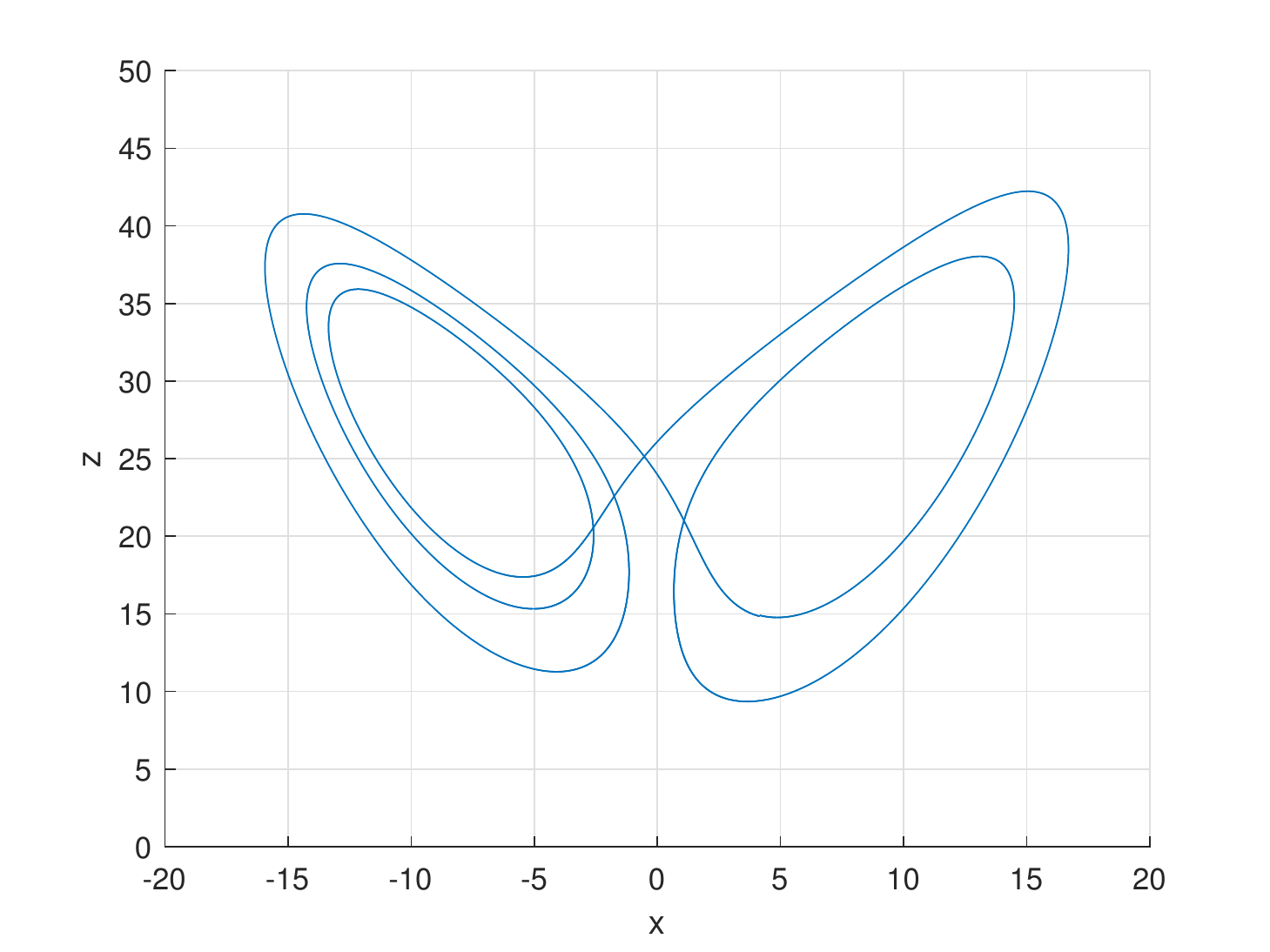}}
\subfigure[p=5, LLLLR]{\includegraphics[height=0.3\textwidth,width=0.3\textwidth]{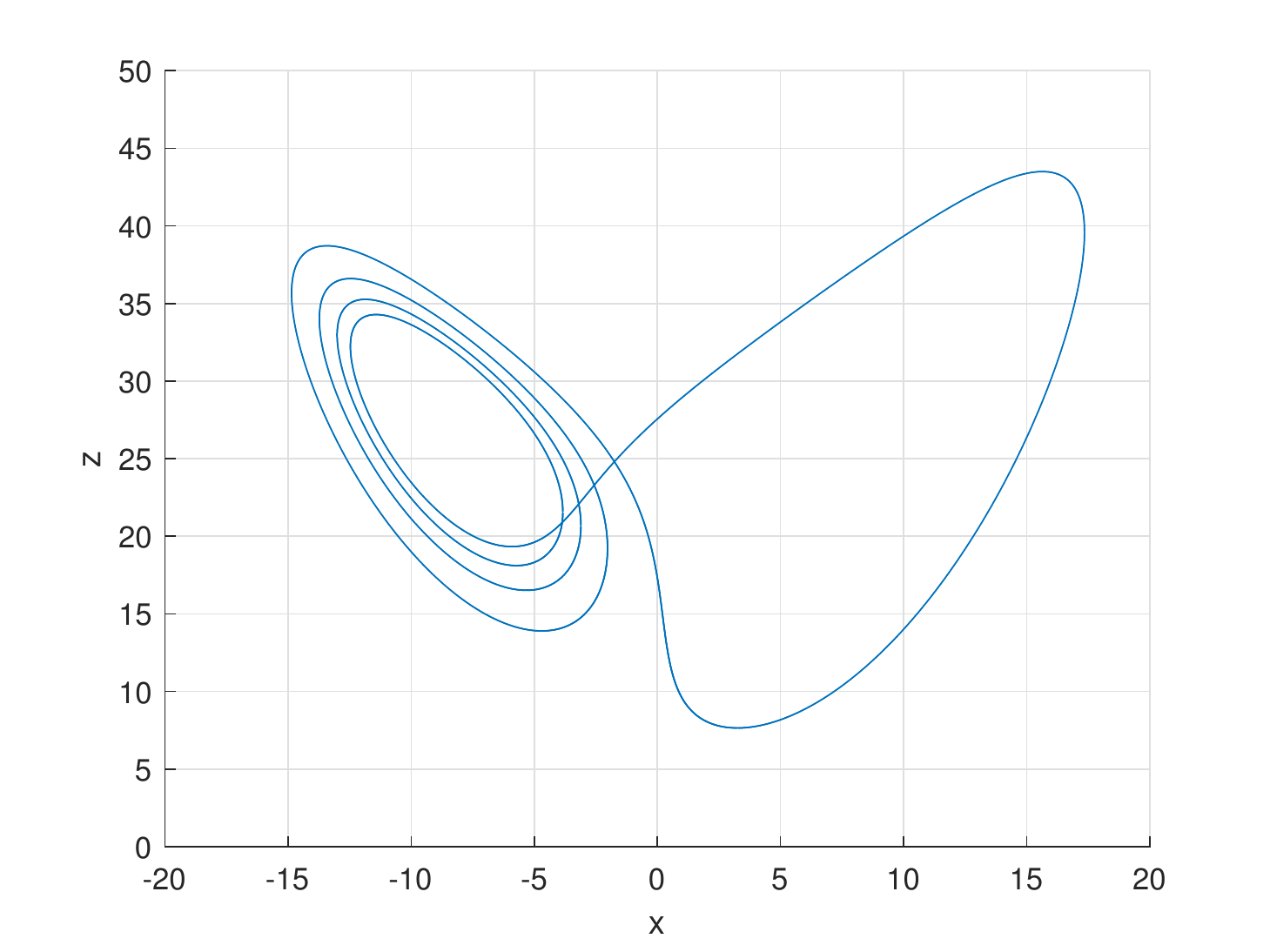}}
\subfigure[p=5, LLRLR]{\includegraphics[height=0.3\textwidth,width=0.3\textwidth]{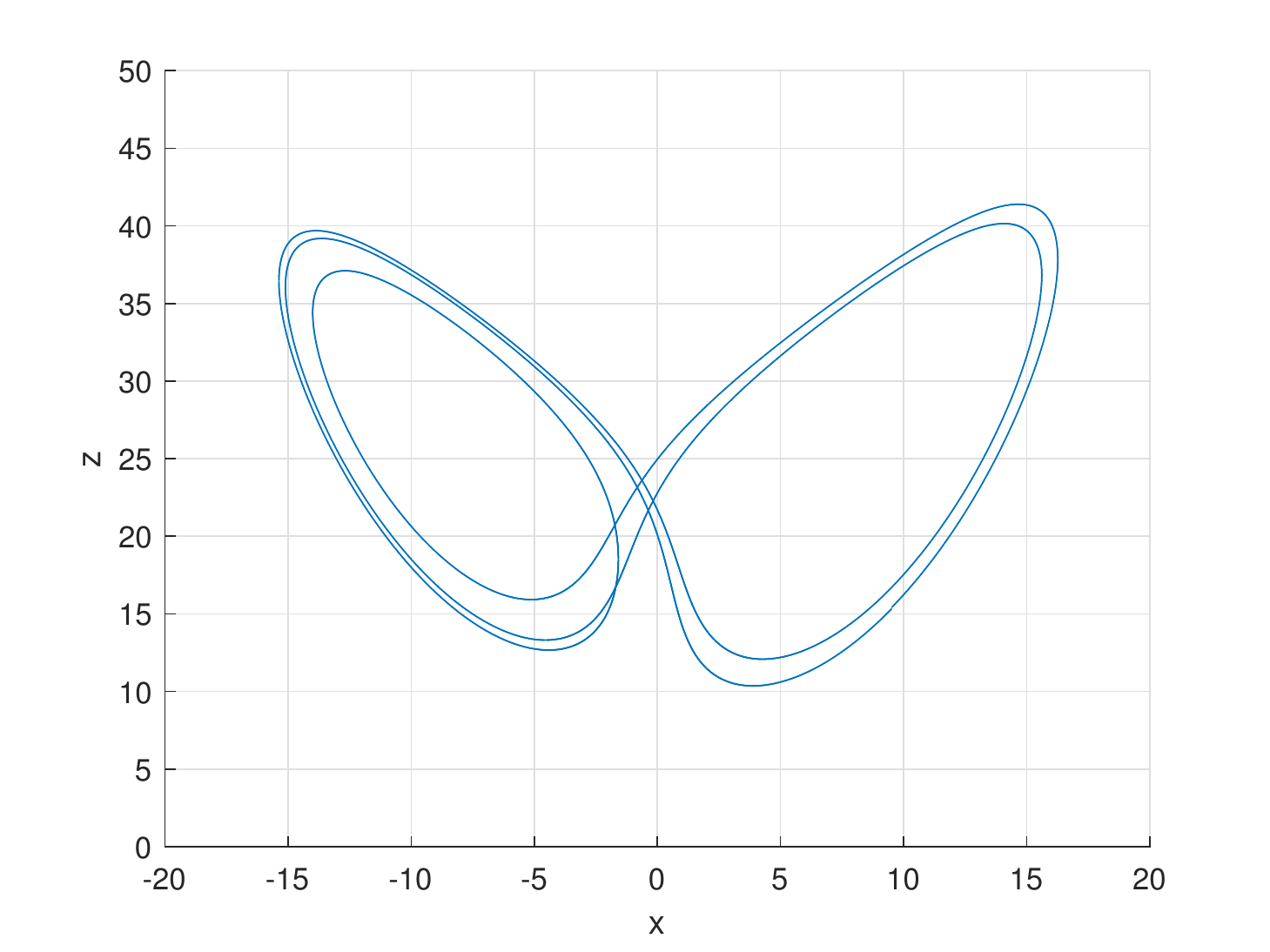}}
\subfigure[p=6, LLLRRR]{\includegraphics[height=0.3\textwidth,width=0.3\textwidth]{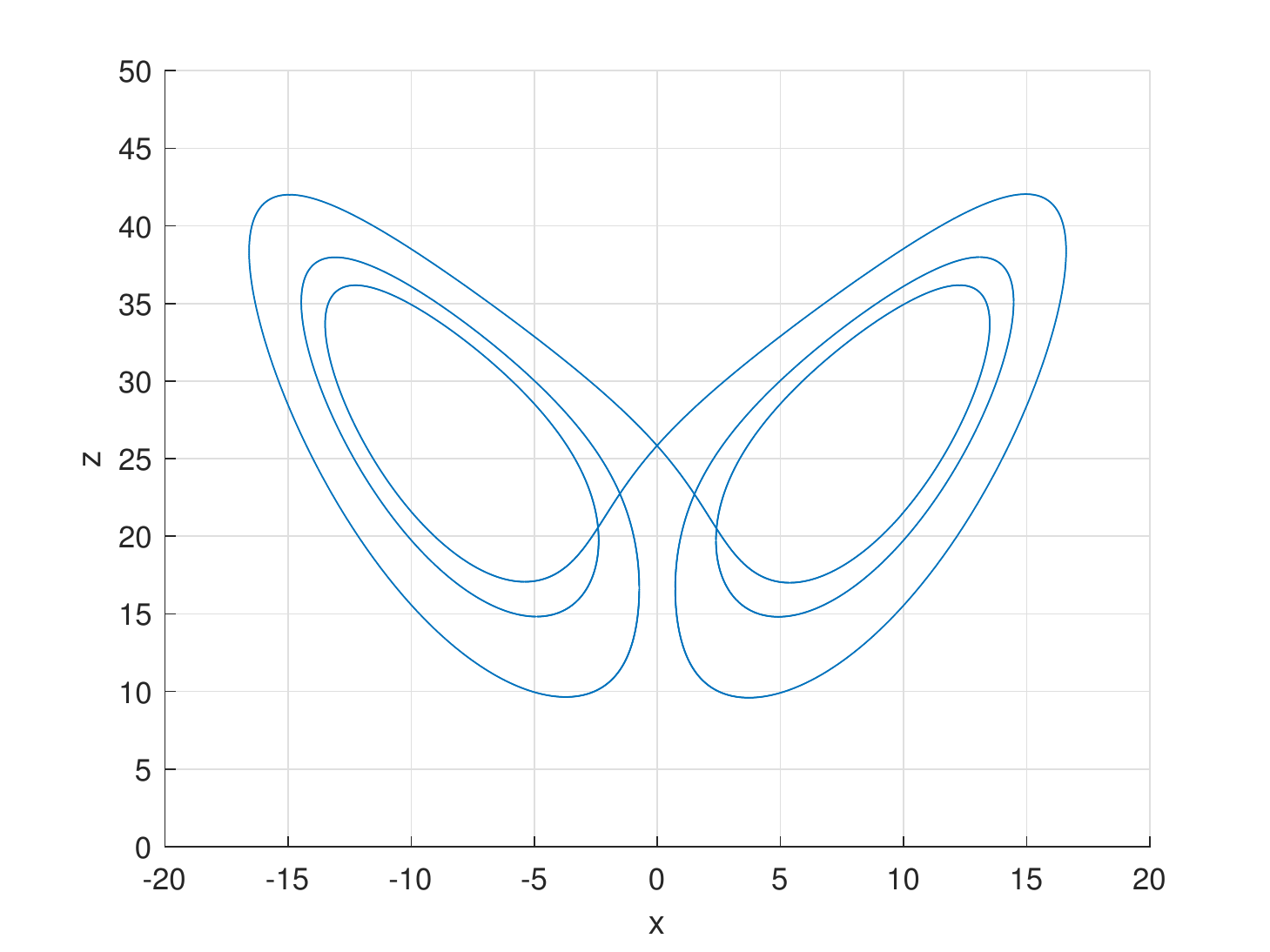}}
\subfigure[p=6, LLLLLR]{\includegraphics[height=0.3\textwidth,width=0.3\textwidth]{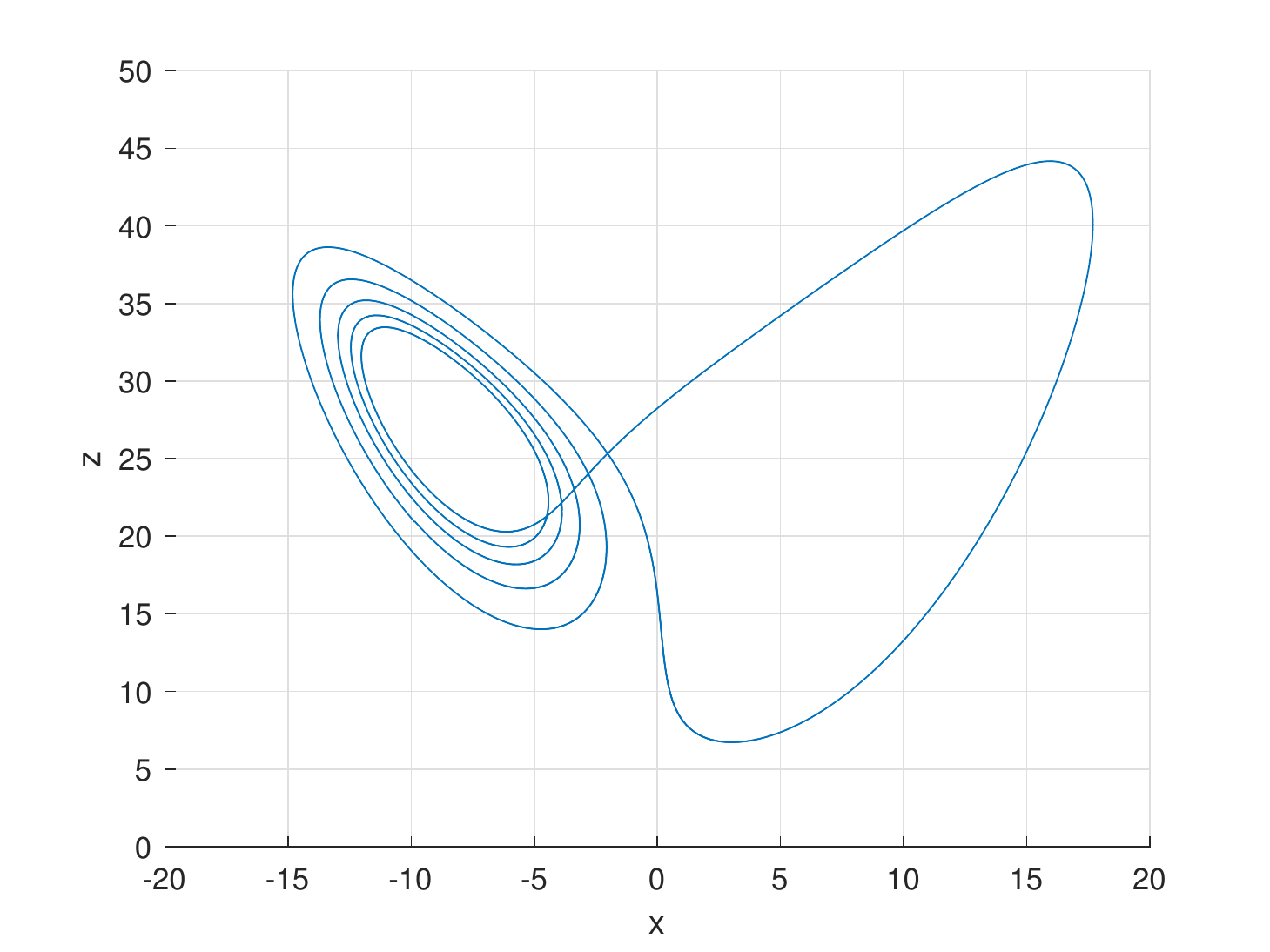}}
\subfigure[p=6, LLLLRR]{\includegraphics[height=0.3\textwidth,width=0.3\textwidth]{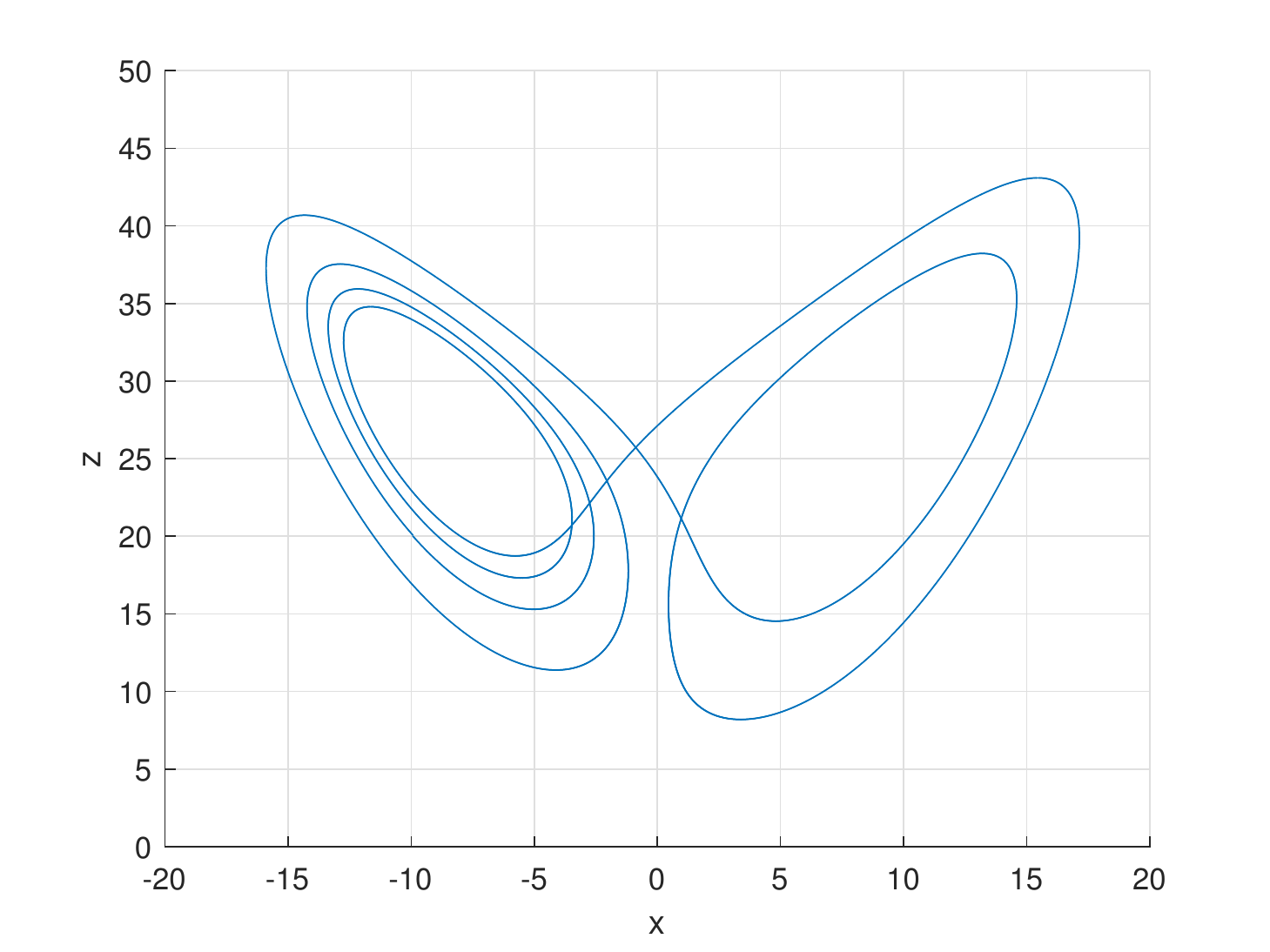}}
\subfigure[p=6, LLLRLR]{\includegraphics[height=0.3\textwidth,width=0.3\textwidth]{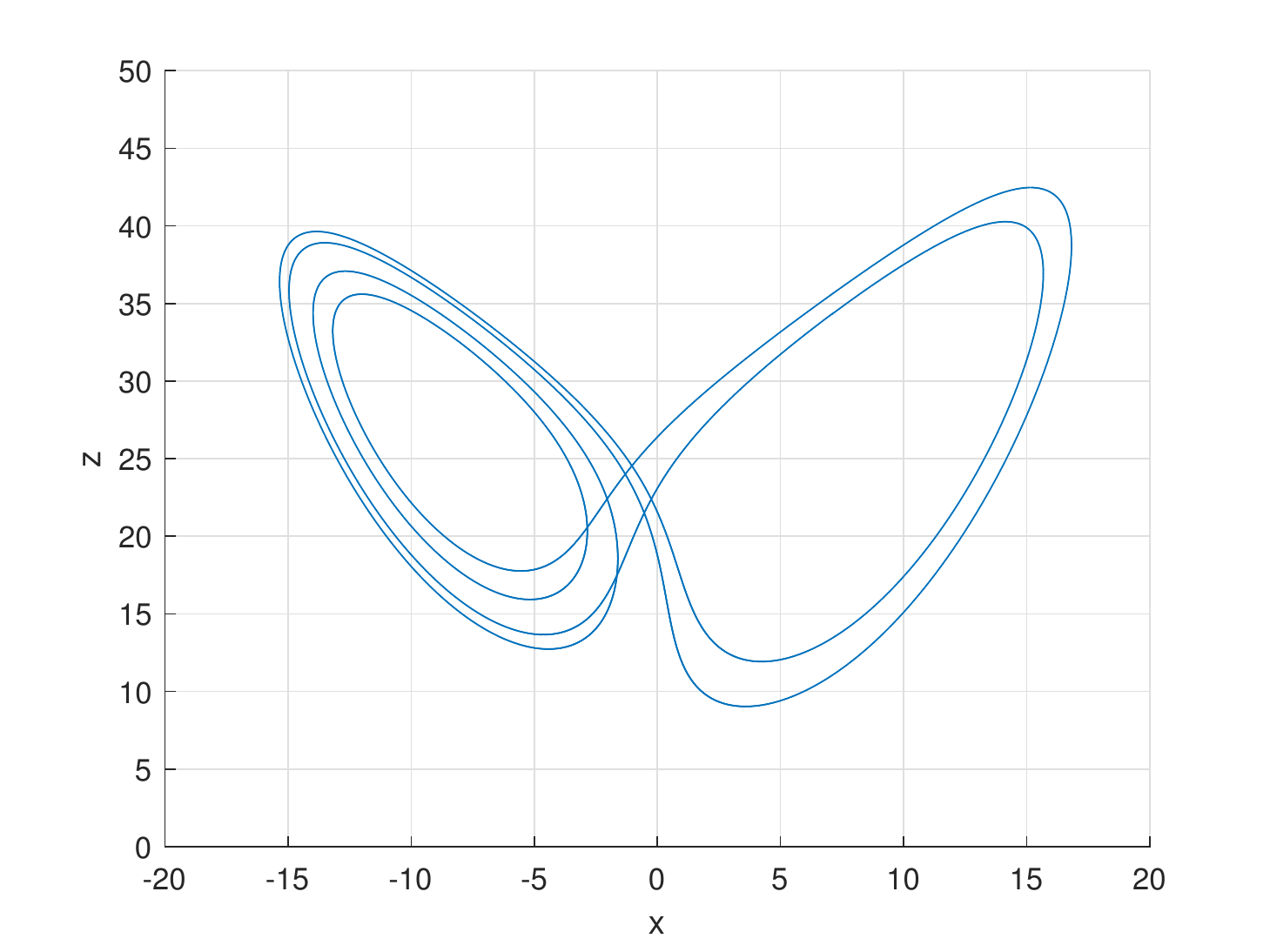}}
\subfigure[p=6, LLRLRR]{\includegraphics[height=0.3\textwidth,width=0.3\textwidth]{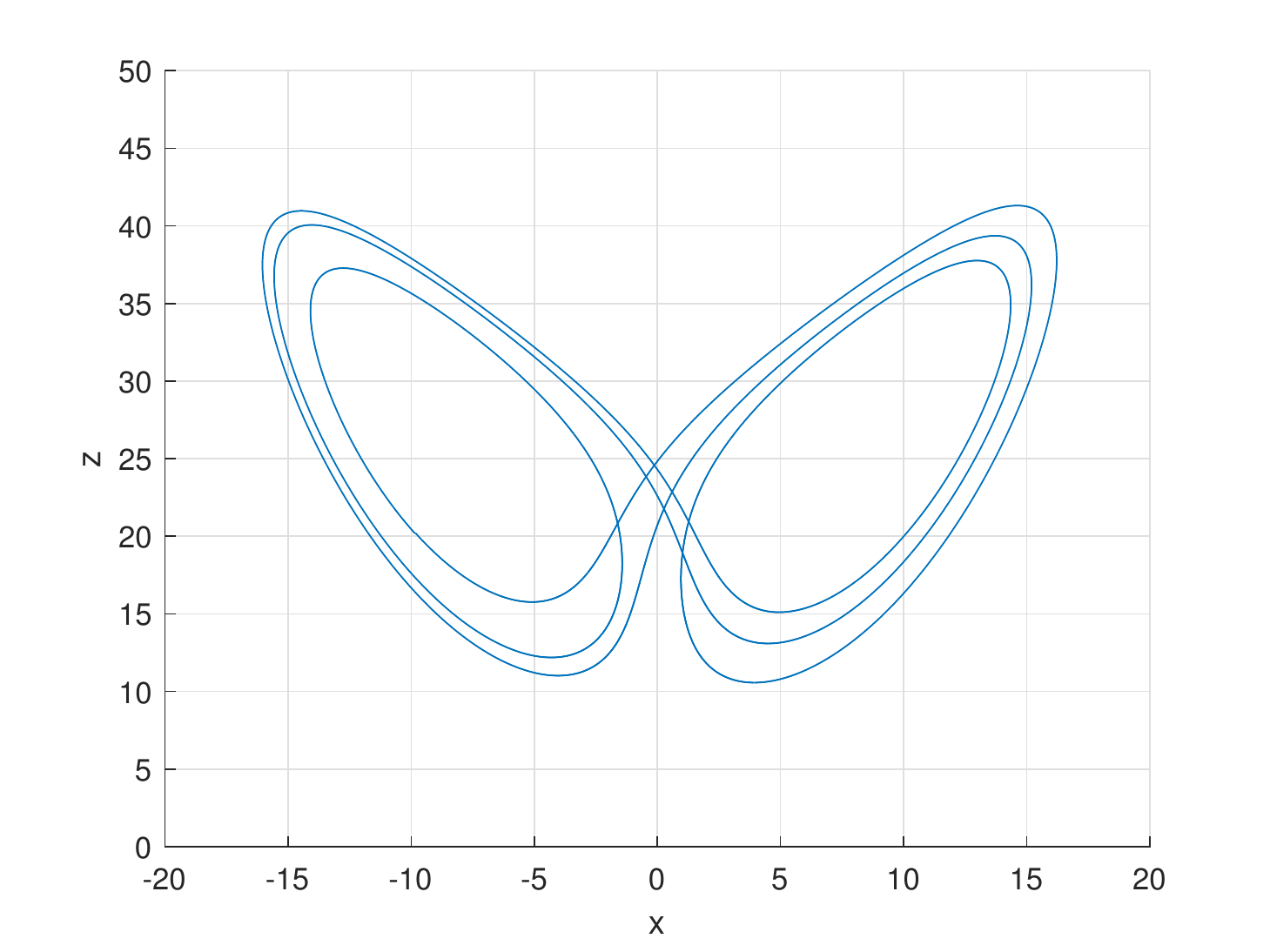}}
\caption{Periodic orbits of the Lorenz system with $p \le 6$, where $p$ is the period, and the order in which L and R appear is the trajectory of the periodic orbit.}
\label{Fig:fige11}
\end{figure}

\section{Conclusion}

In this paper, a novel method based on the similarity signature curve is proposed for forming the periodic orbits of the Lorenz system, and some experimental results are exhibited to show the efficacy of this method.

Firstly, through the similarity invariants of the Lorenz system and a series of experiments, we can demonstrate the chaotic patterns of the Lorenz system evolution, and ensure the feasibility of the method of applying the similarity signature curve to calculate the periodic orbits.

Secondly, in order to continuously obtain the required quasi-periodic orbits, the conditions for the window moving in the sliding window method are improved. The window is updated by repeatedly selecting the points in the subsequent data which are closest to the initial point on the similarity signature curve.

In this way, all periodic orbits with period $p \leqslant 8$ are found. Also, it is a common fact to predict long-period orbits by using some short-period orbits. However, the longer the period, the larger the error and the more difficult to locate the corresponding position. Searching exact periodic orbits with similarity signature curves offers certain advantages comparing with other methods.

\end{document}